\newcommand{\stirling}[2]{\ifm {\left[{{#1}\atop {#2}}\right]} {$\left[{{#1}\atop {#2}}\right]$}}
\newtheorem{theorem}{Theorem}[section]
\newtheorem{corollary}[theorem]{Corollary}
\newtheorem{lemma}[theorem]{Lemma}
\newtheorem*{fact*}{Fact}
\newtheorem{proposition}[theorem]{Proposition}
\theoremstyle{definition}
\newtheorem{remark}[theorem]{Remark}
\numberwithin{equation}{section}
\newcommand{\N}{\mathbb N}
\newcommand{\Z}{\mathbb Z}
\newcommand{\A}{\mathbb A}
\newcommand{\F}{\mathbb F}
\newcommand{\K}{\mathbb K}
\newcommand{\R}{\mathbb R}
\newcommand{\Pp}{\mathbb P}
\newcommand{\bfs}{\boldsymbol}
\newcommand{\fq}{\F_{\hskip-0.7mm q}}
\newcommand{\cfq}{\overline{\F}_{\hskip-0.7mm q}}
\def\ifm#1#2{\relax \ifmmode#1\else#2\fi}
\newcommand{\klk}    {\ifm {,\ldots,} {$,\ldots,$}}
\newcommand{\plp}    {\ifm {+\cdots+} {$+\ldots+$}}
\begin{document}

\title[Value set of small families I]{On the
value set of small families of polynomials over a finite field,
I}
\author[E. Cesaratto et al.]{
%
Eda Cesaratto${}^{1,2}$,
%
Guillermo Matera${}^{1,2}$,
%
Mariana P\'erez${}^1$,
%
and Melina Privitelli${}^{2,3}$}

\address{${}^{1}$Instituto del Desarrollo Humano,
Universidad Nacional de Gene\-ral Sarmiento, J.M. Guti\'errez 1150
(B1613GSX) Los Polvorines, Buenos Aires, Argentina}
\email{\{ecesarat,\,gmatera,\,vperez\}@ungs.edu.ar}
\address{${}^{2}$ National Council of Science and Technology (CONICET),
Ar\-gentina}\email{mprivitelli@conicet.gov.ar}
\address{${}^{3}$Instituto de Ciencias,
Universidad Nacional de Gene\-ral Sarmiento, J.M. Guti\'errez 1150
(B1613GSX) Los Polvorines, Buenos Aires, Argentina}

\thanks{The authors were partially supported by the grants
PIP 11220090100421 CONICET and STIC--AmSud ``Dynalco''.}%

\begin{abstract}
We obtain an estimate on the average cardinality of the value set of
any family of monic polynomials of $\fq[T]$ of degree $d$ for which
$s$ consecutive coefficients $a_{d-1}\klk a_{d-s}$ are fixed. Our
estimate holds without restrictions on the characteristic of $\fq$
and asserts that $\mathcal{V}(d,s,\bfs{a})=\mu_d\,q+\mathcal{O}(1)$,
where $\mathcal{V}(d,s,\bfs{a})$ is such an average cardinality,
$\mu_d:=\sum_{r=1}^d{(-1)^{r-1}}/{r!}$ and $\bfs{a}:=(a_{d-1}\klk
d_{d-s})$. We provide an explicit upper bound for the constant
underlying the $\mathcal{O}$--notation in terms of $d$ and $s$ with
``good'' behavior. Our approach reduces the question to estimate the
number of $\fq$--rational points with pairwise--distinct coordinates
of a certain family of complete intersections defined over $\mathbb
F_q$. We show that the polynomials defining such complete
intersections are invariant under the action of the symmetric group
of permutations of the coordinates. This allows us to obtain
critical information concerning the singular locus of the varieties
under consideration, from which a suitable estimate on the number of
$\fq$--rational points is established.
\end{abstract}

\subjclass[2010]{Primary 12C05; Secondary 11G25, 14B05, 14G05, 14G15, 14N05}

\keywords{Finite fields, average value set, symmetric polynomials,
singular complete intersections, rational points}

\maketitle


\section{Introduction}
Let $\fq$ be the finite field of $q$ elements, 
let $T$ be an indeterminate over $\fq$ and let $f\in\fq[T]$. We
define the value set $\mathcal{V}(f)$ of $f$ as
$\mathcal{V}(f):=|\{f(c):c\in\fq\}|$ (cf. \cite{LiNi83}). Birch and
Swinnerton--Dyer established the following significant result
\cite{BiSD59}: for fixed $d\ge 1$, if $f$ is a generic polynomial of
degree $d$, then
$$\mathcal{V}(f)=\mu_d\,q+
\mathcal{O}(q^{1/2}),$$
where $\mu_d:=\sum_{r=1}^d{(-1)^{r-1}}/{r!}$ and the constant
underlying the $\mathcal{O}$--notation depends only on $d$.

Results on the average value $\mathcal{V}(d,0)$ of $\mathcal{V}(f)$
when $f$ ranges over all monic polynomials in $\fq[T]$ of degree $d$
with $f(0)=0$ were obtained by Uchiyama \cite{Uchiyama55a} and
improved by Cohen \cite{Cohen73}. More precisely, in \cite[\S
2]{Cohen73} it is shown that
$$\mathcal{V}(d,0)=\sum_{r=1}^d(-1)^{r-1}\binom{q}{r}q^{1-r}=\mu_d\,q
+\mathcal{O}(1).$$
However, if some of the coefficients of $f$ are fixed, the results
on the average value of $\mathcal{V}(f)$ are less precise. In fact,
Uchiyama \cite{Uchiyama55b} and Cohen \cite{Cohen72} obtain the
result that we now state. Let be given $s$ with $1\le s\le d-2$ and
$\bfs{a}:=(a_{d-1}\klk a_{d-s})\in\fq^s$. For every
$\boldsymbol{b}:=(b_{d-s-1}\klk b_1)$, let
$$f_{\boldsymbol{b}}:=f_{\boldsymbol{b}}^{\bfs{a}}
:=T^d+\sum_{i=1}^sa_{d-i}T^{d-i}+\sum_{i=s+1}^{d-1}b_{d-i}T^{d-i}.$$
Then for $p:=\mathrm{char}(\fq)>d$,
\begin{equation}\label{eq: average value set}
\mathcal{V}(d,s,\bfs{a}):=
\frac{1}{q^{d-s-1}}\sum_{\boldsymbol{b}\in\fq^{d-s-1}}\mathcal{V}(f_{\boldsymbol{b}})=
\mu_d\,q+\mathcal{O}(q^{1/2}),
\end{equation}
where the constant underlying the $\mathcal{O}$--notation depends
only on $d$ and $s$.

This paper is devoted to obtain an strengthened explicit version of
(\ref{eq: average value set}), which holds without any restriction
on $p$. More precisely, we shall show the following result (see
Theorem \ref{theorem: final main result} below).
\begin{theorem}
With notations as above, for $q>d$ and $1\le s\le \frac{d}{2}-1$ we
have
$$\left|\mathcal{V}(d,s,\bfs{a})-\mu_d\,q-\frac{1}{2e}\right|\le
\frac{(d-2)^5e^{2\sqrt{d}}}{2^{d-2}} +\frac{7}{q}.$$
\end{theorem}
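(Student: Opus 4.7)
The starting point is the standard inclusion--exclusion identity
\begin{equation*}
\mathcal{V}(f)\;=\;\sum_{r=1}^d\frac{(-1)^{r-1}}{r!}\,\mathcal{X}_r(f),
\end{equation*}
where $\mathcal{X}_r(f)$ counts ordered $r$-tuples $(c_1\klk c_r)$ of pairwise distinct elements of $\fq$ at which $f$ assumes a common value. Averaging over $\bfs{b}\in\fq^{d-s-1}$ and interchanging sums, the computation of $\mathcal{V}(d,s,\bfs{a})$ reduces to evaluating, for each $r\ge 2$, the quantity
\begin{equation*}
\chi_r\;:=\;\sum_{\bfs{b}\in\fq^{d-s-1}}\mathcal{X}_r(f_{\bfs{b}});
\end{equation*}
the $r=1$ term already accounts for the main contribution $\mu_d q$.

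I would next reinterpret $\chi_r$ geometrically as the number of $\fq$-rational points with pairwise distinct first $r$ coordinates of the algebraic set $\Gamma_r\subset\A^{r+d-s-1}$ cut out by the $r-1$ equations $f_{\bfs{b}}(c_i)-f_{\bfs{b}}(c_1)=0$ for $i=2\klk r$. These equations are linear in $\bfs{b}$ but not symmetric in $c_1\klk c_r$, blocking a clean geometric analysis. To access the $\mathfrak{S}_r$-invariance promised by the abstract, I would replace them by an $\mathfrak{S}_r$-invariant complete intersection $V_r\subset\A^{r+d-s-1}$ of the same dimension $d-s$, cut out by $r-1$ symmetric polynomials whose common zero locus agrees with $\Gamma_r$; concretely, these may be taken as the $r-1$ conditions expressing that $\prod_{i=1}^r\bigl(T-f_{\bfs{b}}(c_i)\bigr)$ is a perfect $r$-th power in $T$, packaged either via elementary symmetric polynomials or via power sums of $f_{\bfs{b}}(c_1)\klk f_{\bfs{b}}(c_r)$.

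The geometric heart of the argument is the analysis of $V_r$. Exploiting the $\mathfrak{S}_r$-action, I would show that the Jacobian of the symmetric defining system has rank-drop locus contained in the big diagonal $\{c_i=c_j \text{ for some } i\ne j\}$, so that $V_r$ is absolutely irreducible and its singular locus lies in a subvariety of codimension at least $2$, uniformly in $\mathrm{char}(\fq)$. An effective Cafure--Matera--type bound for normal complete intersections then yields $|V_r(\fq)|=q^{d-s}+O(q^{d-s-1})$ with constants polynomial in $d$, and removing the diagonal contributions by a secondary inclusion--exclusion produces
\begin{equation*}
\frac{\chi_r}{q^{d-s-1}}\;=\;q\prod_{j=1}^{r-1}\Big(1-\frac{j}{q}\Big)+O(1/q)\;=\;q-\binom{r}{2}+O(1/q).
\end{equation*}
Inserting this into the inclusion--exclusion formula and using
\begin{equation*}
\sum_{r=2}^d\frac{(-1)^{r-1}}{r!}\,\binom{r}{2}\;=\;-\frac{1}{2}\sum_{k=0}^{d-2}\frac{(-1)^k}{k!}\;\longrightarrow\;-\frac{1}{2e}
\end{equation*}
as $d\to\infty$ produces $\mathcal{V}(d,s,\bfs{a})=\mu_d q+\frac{1}{2e}+O(1/q)$; the tail of the alternating series and the explicit $O$-constants combine into the stated bound via standard factorial estimates.

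The step I expect to be the main obstacle is the uniform geometric analysis of $V_r$: establishing the complete-intersection and absolute irreducibility properties and pinning down the codimension of the singular locus \emph{uniformly in $r\le d$ and in every characteristic $p$}. The rank-drop loci of the original non-symmetric Jacobian are delicate in small characteristic, but the symmetric reformulation should rigidify them, allowing a suitable Jacobian minor to factor as an explicit polynomial times $\prod_{i<j}(c_i-c_j)$ independently of $p$. Tracking the Cafure--Matera constants through the diagonal inclusion--exclusion while keeping the final bound clean will also require some care.
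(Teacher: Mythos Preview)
Your outline captures the right overall shape---reduce to point counts on symmetric complete intersections, bound singularities, apply an effective Weil-type estimate---but it diverges from the paper at the key technical step, and the substitute you propose has a genuine gap.

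The paper does \emph{not} work in $\A^{r+d-s-1}$. Its first move is to eliminate the free coefficients $\bfs{b}$ entirely: a subset $\{x_1,\ldots,x_r\}$ with $r>d-s$ admits an interpolant of degree $\le d-s-1$ iff the remainder of $f_{\bfs{a}}$ modulo $(T-X_1)\cdots(T-X_r)$ has degree $\le d-s-1$, i.e.\ iff certain polynomials $R_{d-s}^{\bfs{a}},\ldots,R_{r-1}^{\bfs{a}}\in\fq[X_1,\ldots,X_r]$ vanish. The crucial structural fact (Proposition~\ref{prop: formula for Rj}) is that each $R_j^{\bfs{a}}$ is, up to a unit, \emph{monic of degree one} in $\Pi_{d-j}$ over $\fq[\Pi_1,\ldots,\Pi_{d-j-1}]$. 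This triangular shape immediately gives (H1)--(H2), and the Jacobian factorisation $(\partial\bfs{R}/\partial\bfs{X})=(\partial\bfs{S}/\partial\bfs{Y})\cdot(\partial\Pi/\partial\bfs{X})$ together with the Vandermonde structure of $(\partial\Pi/\partial\bfs{X})$ forces the rank-drop locus to lie in the set of points with at most $s-1$ pairwise-distinct coordinates, hence has dimension $\le s-1$. Your symmetrisation via ``$\prod(T-f_{\bfs{b}}(c_i))$ is an $r$-th power'' keeps the $b_j$ in play, produces equations of degree up to $dr$ rather than $\le s$, and carries no visible triangular structure; I do not see how to extract the needed singular-locus bound from it.

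Two further points. First, ``rank-drop locus contained in the big diagonal'' gives codimension $\ge 1$ in $V_r$, not $\ge 2$; and codimension $\ge 2$ would still be insufficient. The estimate you invoke (here, \cite{CaMaPr13}) requires the singular locus to have dimension $\le\dim V_r-3$, which in the paper's setup reads $s-1\le(d-s)-3$; this is exactly where the hypothesis $s\le d/2-1$ enters, and your sketch never explains its role. Second, your assertion that $\chi_r/q^{d-s-1}=q-\binom{r}{2}+O(1/q)$ for all $r$ is unjustified: for $r\le d-s$ the count is an exact linear-algebra computation (the paper's $\binom{q}{r}q^{1-r}$ term), but for $d-s+1\le r\le d$ the best one obtains is $q+O(1)$, with the $O(1)$ controlled by the explicit constants of Theorem~\ref{theorem: estimate chi(a,r)}. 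The $\tfrac{1}{2e}$ arises from the easy range alone; the hard range contributes the $(d-2)^5e^{2\sqrt{d}}/2^{d-2}$ error term, and your sketch has no mechanism to produce it. (Incidentally, the $r=1$ term contributes $q$, not $\mu_d q$; the latter comes from summing the leading $q$ over all $r\le d$.)
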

This result strengthens (\ref{eq: average value set}) in several
aspects. The first one is that it holds without any restriction on
the characteristic $p$ of $\fq$, while (\ref{eq: average value set})
holds for $p>d$. The second aspect is that we show that
$\mathcal{V}(d,s,\bfs{a})=\mu_d\, q+\mathcal{O}(1)$, while (\ref{eq:
average value set}) only asserts that
$\mathcal{V}(d,s,\bfs{a})=\mu_d\, q+\mathcal{O}(q^{1/2})$. Finally,
we obtain an explicit expression for the constant underlying the
$\mathcal{O}$--notation with a good behavior, in the sense that we
prove that $\mathcal{V}(d,s,\bfs{a})=\mu_d\,q+\frac{1}{2e}+
\mathcal{O}(\rho^{-d})+\mathcal{O}(q^{-1})$ for any
$\frac{1}{2}<\rho<1$.

On the other hand, it must be said that our result holds for $s\le
d/2-1$, while (\ref{eq: average value set}) holds for $s$ varying in
a larger range of values. Numerical experimentation seems to
indicate that our result holds for any $s$ with $1\le s\le d-2$.
This aspect shall be addressed in a second paper, where we obtain an
explicit estimate showing that
$\mathcal{V}(d,s,\bfs{a})=\mu_d\,q+\mathcal{O}(q^{1/2})$ which is
valid for $1\le s\le d-3$ and $p>2$. We shall also exhibit estimates
on the second moment of the value set of the families of polynomials
under consideration.

In order to obtain our estimate, we express the quantity
$\mathcal{V}(d,s,\bfs{a})$ in terms of the number $\chi_r^{\bfs{a}}$
of certain ``interpolating sets'' with $d-s+1\le r\le d$ (see
Theorem \ref{theorem: interpolation problem - reduction to interp
sets} below). More precisely, for
$f_{\bfs{a}}:=T^d+a_{d-1}T^{d-1}\plp a_{d-s}T^{d-s}$, we define
$\chi_r^{\bfs{a}}$ as the number of $r$--element subsets of $\fq$ at
which $f_{\bfs{a}}$ can be interpolated by a polynomial of degree at
most $d-s-1$.

Then we express $\chi_r^{\bfs{a}}$ in terms of the number of
$q$--rational solutions with pairwise--distinct coordinates of a
polynomial system $\{R_{d-s}^{\bfs{a}}=0\klk R_{r-1}^{\bfs{a}}=0\}$,
where $R_{d-s}^{\bfs{a}}\klk R_{r-1}^{\bfs{a}}$ are certain
polynomials in $\fq[X_1,\ldots,X_r]$. A critical point for our
approach is that $R_{d-s}^{\bfs{a}}\klk R_{r-1}^{\bfs{a}}$ are
symmetric polynomials, namely invariant under any permutation of the
variables $X_1,\ldots, X_r$. More precisely, we prove that each
$R_j^{\bfs{a}}$ can be expressed as a polynomial in the first $s$
elementary symmetric polynomials of $\fq[X_1,\ldots, X_r]$
(Proposition \ref{prop: formula for Rj}). This allows us to
establish a number of facts concerning the geometry of set
$V_r^{\bfs{a}}$ of solutions of such a polynomial system (see, e.g.,
Corollary \ref{coro:dimension singular locus Vf} and Theorems
\ref{theorem: Vf at infinity} and \ref{theorem: proj closure of Vf
is abs irred}). Combining these results with estimates on the number
of $q$--rational points of singular complete intersections of
\cite{CaMaPr13}, we obtain our main result.

We finish this introduction by stressing on the methodological
aspects. As mentioned before, a key point is the invariance of the
family of sets $V_r^{\bfs{a}}$ under the action of the symmetric
group of $r$ elements. In fact, our results on the geometry of
$V_r^{\bfs{a}}$ and the estimates on the number of $q$--rational
points can be extended \textit{mutatis mutandis} to any symmetric
complete intersection whose projection on the set of primary
invariants (using the terminology of invariant theory) defines a
nonsingular complete intersection. This might be seen as a further
source of interest of our approach, since symmetric polynomials
arise frequently in combinatorics, coding theory and cryptography
(for example, in the study of deep holes in Reed--Solomon codes,
almost perfect nonlinear polynomials or differentially uniform
mappings; see, e.g., \cite{CaMaPr12}, \cite{Rodier09} or
\cite{AuRo09}).
%
%
\section{Value sets in terms of interpolating sets}
Let notations and assumptions be as in the previous section. In this
section we fix $s$ with $1\le s\le d-2$, an $s$--tuple
$\boldsymbol{a}:=(a_{d-1}\klk a_{d-s})\in\fq^s$ and denote
$$f_{\bfs{a}}:=T^d+a_{d-1}T^{d-1}\plp a_{d-s}T^{d-s}.$$
For every $\bfs{b}:=(b_{d-s-1}\klk b_1)\in\fq^{d-s-1}$, we denote by
$f_{\bfs{b}}:=f_{\bfs{b}}^{\bfs{a}}\in\fq[T]$ the following
polynomial
$$f_{\bfs{b}}:=f_{\bfs{a}}+
b_{d-s-1}T^{d-s-1}\plp b_1T.$$

For a given $\bfs{b}\in\fq^{d-s-1}$, the value set
$\mathcal{V}(f_{\bfs{b}})$ of $f_{\bfs{b}}$ equals the number of
elements $b_0\in\fq$ for which the polynomial $f_{\bfs{b}}+b_0$ has
at least one root in $\fq$. Let $\fq[T]_d$ denote the set of
polynomials of $\fq[T]$ of degree at most $d$, let
$\mathcal{N}:\fq[T]_d\to\Z_{\ge 0}$ be the counting function of the
number of roots in $\fq$ and let
$\bfs{1}_{\{\mathcal{N}>0\}}:\fq[T]_d\to\{0,1\}$ be the
characteristic function of the set of elements of $\fq[T]_d$ having
at least one root in $\fq$. From our previous assertion we deduce
the following identity:
\begin{eqnarray*}
\sum_{\bfs{b}\in\fq^{d-s-1}}\mathcal{V}(f_{\bfs{b}})&=&
\sum_{b_0\in\fq}\sum_{\bfs{b}\in\fq^{d-s-1}}
\bfs{1}_{\{\mathcal{N}>0\}}
(f_{\bfs{b}}+b_0)\\&=&\big|\{g\in\fq[T]_{d-s-1}:
\mathcal{N}(f_{\bfs{a}}+g)>0 \}\big|.
\end{eqnarray*}

For a set $\mathcal{X}\subseteq\fq$, we define
$\mathcal{S}_{\mathcal{X}}^{\bfs{a}}$ as the set $\fq[T]_{d-s-1}$ of
polynomials of $\fq[T]$ of degree at most $d-s-1$ which interpolate
$-f_{\bfs{a}}$ at all the points of $\mathcal{X}$, namely
$$
\mathcal{S}_{\mathcal{X}}^{\bfs{a}}:=\{g\in\fq[T]_{d-s-1}:
(f_{\bfs{a}}+g)(x)=0\ \textit{for\ any\ }x\in\mathcal{X}\}.
$$
Finally, for $r\in\N$ we shall use the symbol $\mathcal{X}_r$ to
denote a subset of $\fq$ of $r$ elements.
\begin{theorem}\label{theorem: interpolation problem - reduction to interp sets}
Let be given $s,d\in\N$ with $d<q$ and $1\le s\le d-2$. Then we have
\begin{equation}\label{eq: our formula for value sets}
\mathcal{V}(d,s,\bfs{a})= \sum_{r=1}^{d-s}(-1)^{r-1}\binom {q} {r}
q^{1-r}+\frac{1}{q^{d-s-1}}\hskip-0.25cm\sum_{r=d-s+1}^{d}
\hskip-0.25cm (-1)^{r-1} \chi_r^{\bfs{a}},\end{equation}
where $\mathcal{V}(d,s,\bfs{a})$ is defined as in (\ref{eq: average
value set}) and $\chi_r^{\bfs{a}}$ is the number of subsets
$\mathcal{X}_r$ of $\fq$ of $r$ elements such that there exists
$g\in\fq[T]_{d-s-1}$ for which
$(f_{\bfs{a}}+g)|_{\mathcal{X}_r}\equiv 0$ holds.
\end{theorem}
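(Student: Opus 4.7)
The plan is to expand the indicator $\bfs{1}_{\{\mathcal{N}>0\}}$ via a binomial inclusion--exclusion and swap summations, so that interpolation sets appear naturally. Concretely, for any $h\in\fq[T]_d$ one has
$$\bfs{1}_{\{\mathcal{N}(h)>0\}}=\sum_{r=1}^{d}(-1)^{r-1}\binom{\mathcal{N}(h)}{r},$$
because $\sum_{r=0}^n(-1)^r\binom{n}{r}=0$ for $n\ge 1$ and $\binom{n}{r}=0$ for $r>n$; this is valid because $\deg(f_{\bfs{a}}+g)=d$ forces $\mathcal{N}(f_{\bfs{a}}+g)\le d$. Next I would use the bijection between $r$--subsets of the root set of $f_{\bfs{a}}+g$ and $r$--subsets $\mathcal{X}_r\subseteq\fq$ such that $g\in\mathcal{S}_{\mathcal{X}_r}^{\bfs{a}}$, writing
$$\binom{\mathcal{N}(f_{\bfs{a}}+g)}{r}=\big|\{\mathcal{X}_r\subseteq\fq:g\in\mathcal{S}_{\mathcal{X}_r}^{\bfs{a}}\}\big|.$$
Combining this with the identity already established in the text,
$$\sum_{\bfs{b}\in\fq^{d-s-1}}\mathcal{V}(f_{\bfs{b}})=\big|\{g\in\fq[T]_{d-s-1}:\mathcal{N}(f_{\bfs{a}}+g)>0\}\big|,$$
and interchanging the sums over $g$ and $\mathcal{X}_r$ yields
$$\sum_{\bfs{b}\in\fq^{d-s-1}}\mathcal{V}(f_{\bfs{b}})=\sum_{r=1}^d(-1)^{r-1}\sum_{\mathcal{X}_r\subseteq\fq}|\mathcal{S}_{\mathcal{X}_r}^{\bfs{a}}|.$$

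The next step is to evaluate the inner sum according to the size of $r$. Since $g$ has $d-s$ free coefficients and the condition $g\in\mathcal{S}_{\mathcal{X}_r}^{\bfs{a}}$ imposes $r$ Vandermonde (hence linearly independent) conditions on them, we split into two cases:

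\begin{itemize}
\item If $1\le r\le d-s$, then for every $r$--subset $\mathcal{X}_r$ the system is consistent and $|\mathcal{S}_{\mathcal{X}_r}^{\bfs{a}}|=q^{d-s-r}$, so $\sum_{\mathcal{X}_r}|\mathcal{S}_{\mathcal{X}_r}^{\bfs{a}}|=\binom{q}{r}q^{d-s-r}$.
\item If $d-s+1\le r\le d$, the system has at most one solution, so $|\mathcal{S}_{\mathcal{X}_r}^{\bfs{a}}|\in\{0,1\}$, and by definition $\sum_{\mathcal{X}_r}|\mathcal{S}_{\mathcal{X}_r}^{\bfs{a}}|=\chi_r^{\bfs{a}}$.
\end{itemize}

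Plugging these two evaluations into the previous display and then dividing by $q^{d-s-1}$ (which turns $q^{d-s-r}$ into $q^{1-r}$) yields exactly identity~(\ref{eq: our formula for value sets}). The only delicate point is justifying that every system of $\le d-s$ Vandermonde conditions on $\fq[T]_{d-s-1}$ is consistent (Lagrange interpolation at $r\le d-s$ distinct points in $\fq$, which requires the hypothesis $d<q$ to guarantee $r$ distinct interpolation nodes can be chosen); beyond this the calculation is just bookkeeping of the inclusion--exclusion.
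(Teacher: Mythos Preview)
Your argument is correct and essentially identical to the paper's: both arrive at $\sum_{\bfs{b}}\mathcal{V}(f_{\bfs{b}})=\sum_{r}(-1)^{r-1}\sum_{\mathcal{X}_r}|\mathcal{S}_{\mathcal{X}_r}^{\bfs{a}}|$ by inclusion--exclusion and then split cases according to the rank of the Vandermonde matrix, the only cosmetic difference being that the paper applies inclusion--exclusion to $\big|\bigcup_{x}\mathcal{S}_{\{x\}}^{\bfs{a}}\big|$ (so $r$ runs to $q$ and the range $r>d$ is disposed of separately via $\deg(f_{\bfs{a}}+g)=d$), whereas your binomial expansion of $\bfs{1}_{\{\mathcal{N}>0\}}$ truncates at $r\le d$ from the outset. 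One minor slip in your last sentence: consistency of the $r\le d-s$ interpolation systems follows from full row rank of the Vandermonde matrix and has nothing to do with the hypothesis $d<q$; that hypothesis only guarantees that $r$--subsets of $\fq$ with $r\le d$ actually exist, and even this is inessential for the identity since absent subsets contribute zero on both sides.
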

\begin{proof}
Given a subset $\mathcal{X}_r:=\{x_1\klk x_r\}\subset\fq$, we
consider the corresponding set
$\mathcal{S}_{\mathcal{X}_r}^{\bfs{a}} \subset\fq[T]_{d-s-1}$
defined as above. It is easy to see that
$\mathcal{S}_{\mathcal{X}_r}^{\bfs{a}}=
\bigcap_{i=1}^{r}\mathcal{S}_{\{x_i\}}^{\bfs{a}}$ and
$$
\{g\in\fq[T]_{d-s-1}: \mathcal{N}(f_{\bfs{a}}+g)>0
\}=\bigcup_{x\in\fq} \mathcal{S}_{\{x\}}^{\bfs{a}}.
$$
Therefore, by  the inclusion--exclusion principle we obtain
\begin{equation}\label{eq: eta(n,f) by inclusion-exclusion}
\mathcal{V}(d,s,\bfs{a})=\frac{1}{q^{d-s-1}}\bigg|\bigcup_{x\in
\fq}\mathcal{S}_{\{x\}}^{\bfs{a}}\bigg|=\frac{1}{q^{d-s-1}}
\sum_{r=1}^{q}(-1)^{r-1}\sum_{\mathcal{X}_r \subseteq \fq}\left|
\mathcal{S}_{\mathcal{X}_r}^{\bfs{a}}\right|.
\end{equation}

Now we estimate $|\mathcal{S}_{\mathcal{X}_r}^{\bfs{a}}|$ for a
given set $\mathcal{X}_r:=\{x_1\klk x_r\}\subset\fq$. Let
$g:=b_{d-s-1}T^{d-s-1}+\ldots+b_1T+b_0$ be an arbitrary element of
$\mathcal{S}_{\mathcal{X}_r}^{\bfs{a}}$. Then we have
$f_{\bfs{a}}(x_i)+g(x_i)=0$ for $1\le i\le r$. These identities can
be expressed in matrix form as follows:
$$\mathcal{M}(\mathcal{X}_r)\cdot\widehat{\bfs{b}}+f_{\bfs{a}}(\mathcal{X}_r)=0$$
where $\mathcal{M}(\mathcal{X}_r):=(m_{i,j})\in \fq^{r\times(d-s)}$
is the Vandermonde matrix defined by $m_{i,j}:=x_i^{d-s-j}$ for
$1\le i\le r$ and $1\le j\le d-s$, $\widehat{\bfs{b}}:=(b_{d-s-1}
\klk b_0)\in\fq^{d-s}$ and
$f_{\bfs{a}}(\mathcal{X}_r):=(f_{\bfs{a}}(x_1)\klk f_{\bfs{a}}(x_r))
\in \fq^r$.

Since $x_i\not= x_j$ for $i\not=j$, it follows that
\begin{equation}\label{eq: rank nonsquare Vandermonde matrix}
\mathrm{rank}(\mathcal{M}(\mathcal{X}_r))=\min\{r,d-s\}.
\end{equation}
We conclude that $\mathcal{S}_{\mathcal{X}_r}^{\bfs{a}}$ is an
$\fq$--linear variety and either
$\mathcal{S}_{\mathcal{X}_r}^{\bfs{a}}=\emptyset$ or
\begin{equation}\label{eq: dimension S(X_r)(f)}
\mathrm{rank}(\mathcal{M}(\mathcal{X}_r))+\dim
\mathcal{S}_{\mathcal{X}_r}^{\bfs{a}}=d-s.
\end{equation}

Suppose first that $r\leq d-s$. Then (\ref{eq: rank nonsquare
Vandermonde matrix}) implies
$\mathrm{rank}(\mathcal{M}(\mathcal{X}_r))=r$, and hence,
$\mathcal{S}_{\mathcal{X}_r}^{\bfs{a}}$ is not empty. From (\ref{eq:
dimension S(X_r)(f)}) one obtains
$\dim\mathcal{S}_{\mathcal{X}_r}^{\bfs{a}}=d-s-r$ and then
\begin{equation}\label{eq: S(X_r)(f) for r le n}
 |\mathcal{S}_{\mathcal{X}_r}^{\bfs{a}}|=q^{d-s-r}.
\end{equation}

Next we suppose that $r\ge d-s+1$. On one hand, if
$\mathcal{S}_{\mathcal{X}_r}^{\bfs{a}}$ is nonempty, then (\ref{eq:
dimension S(X_r)(f)}) implies $\dim
\mathcal{S}_{\mathcal{X}_r}^{\bfs{a}}=0$, and hence
$|\mathcal{S}_{\mathcal{X}_r}^{\bfs{a}}|=1$. On the other hand, if
$\mathcal{S}_{\mathcal{X}_r}^{\bfs{a}}$ is empty, then
$|\mathcal{S}_{\mathcal{X}_r}^{\bfs{a}}|=0$.

For $r>d$ we have that, if $g\in
\mathcal{S}_{\mathcal{X}_r}^{\bfs{a}}$, then $g\in \fq[T]_{d-s-1}$
and $f_{\bfs{a}}(x_i)+g(x_i)=0$ holds for $1\le i\le r$. As a
consequence, the (nonzero) polynomial $f_{\bfs{a}}+g$ has degree $d$
and $r$ different roots, which contradicts the hypothesis $r>d$. We
conclude that $\mathcal{S}_{\mathcal{X}_r}^{\bfs{a}}$ is empty, and
thus,
\begin{equation}\label{eq: S(X_r)(f) for r ge k>n}
|\mathcal{S}_{\mathcal{X}_r}^{\bfs{a}}|=0.
\end{equation}
Finally, for $d-s+1\le r\le d$ any of the cases
$|\mathcal{S}_{\mathcal{X}_r}^{\bfs{a}}|=0$ or
$|\mathcal{S}_{\mathcal{X}_r}^{\bfs{a}}|=1$ can arise.

Now we are able to obtain the expression for
$\mathcal{V}(d,s,\bfs{a})$ of the statement of the theorem.
Combining (\ref{eq: eta(n,f) by inclusion-exclusion}), (\ref{eq:
S(X_r)(f) for r le n}) and (\ref{eq: S(X_r)(f) for r ge k>n}) we
obtain
$$
q^{d-s-1}\mathcal{V}(d,s,\bfs{a})=\sum_{r=1}^{d-s} (-1)^{r-1} \binom
{q} {r}
    q^{d-s-r}+\sum_{r=d-s+1}^d(-1)^{r-1} \sum_{\mathcal{X}_r\subset\fq}
    |\mathcal{S}_{\mathcal{X}_r}^{\bfs{a}}|.
$$
From this identity we immediately deduce the statement of the
theorem.
\end{proof}
By definition we have $0\leq \chi_r^{\bfs{a}}\leq {\binom {q} {r}}$.
Our main result (Theorem \ref{theorem: estimate chi(a,r)}) asserts
that $\chi_r^{\bfs{a}}=\frac{1}{r!}q^{d-s}+\mathcal{O}(q^{d-s-1})$,
with an explicit upper bound for the constant underlying the
$\mathcal{O}$--notation in terms of $d$, $s$ and $r$.
%
%
%
\subsection{An algebraic approach to estimate the number of interpolating sets}
\label{subsec: relating interpol to rat points}
According to Theorem \ref{theorem: interpolation problem - reduction
to interp sets}, the asymptotic behavior of
$\mathcal{V}(d,s,\bfs{a})$ is determined by that of
$\chi_r^{\bfs{a}}$ for $d-s+1\le r\le d$. In order to find the
latter, we follow an approach inspired in \cite{ChMu07}, and further
developed in \cite{CaMaPr12}, which we now describe.

Fix a set $\mathcal{X}_r:=\{x_1\klk x_r\}\subset\fq$ of $r$ elements
and $g\in\fq[T]_{d-s-1}$. Then $g$ belongs to
$\mathcal{S}_{\mathcal{X}_r}^{\bfs{a}}$ if and only if
$(T-x_1)\cdots(T-x_r)$ divides $f_{\bfs{a}}+g$ in $\fq[T]$. Since
$\deg g\le d-s-1<r$, we have that the latter is equivalent to the
condition that $-g$ is the remainder of the division of
$f_{\bfs{a}}$ by $(T-x_1)\cdots(T-x_r)$. In other words, the set
$\mathcal{S}_{\mathcal{X}_r}^{\bfs{a}}$ is not empty if and only if
the remainder of the division of $f_{\bfs{a}}$ by $(T-x_1)\cdots
(T-x_r)$ has degree at most $d-s-1$.

Let $X_1,\ldots, X_r$ be indeterminates over $\cfq$, let
$\bfs{X}:=(X_1\klk X_r)$ and let $Q\in \fq[\bfs{X}][T]$ be the
polynomial
$$Q=(T-X_1)\cdots(T-X_r).$$
We have that there exists $R_{\bfs{a}}\in\fq[\bfs{X}][T]$ with $\deg
R_{\bfs{a}}\leq r-1$ such that the following relation holds:
\begin{equation}\label{eq: congruence mod Q}
f_{\bfs{a}}\equiv R_{\bfs{a}}\mod{Q}.
\end{equation}
Let $R_{\bfs{a}}:=R_{r-1}^{\bfs{a}}(\bfs{X})T^{r-1}\plp
R_0^{\bfs{a}}(\bfs{X})$. Then $R_{\bfs{a}}(x_1\klk x_r,T)\in\fq[T]$
is the remainder of the division of $f_{\bfs{a}}$ by $(T-x_1)\cdots
(T-x_r)$. As a consequence, the set
$\mathcal{S}_{\mathcal{X}_r}^{\bfs{a}}$ is not empty if and only if
the following identities hold:
\begin{equation}\label{eq: system defining Vr}
R_j^{\bfs{a}}(x_1\klk x_r)=0\quad (d-s\le j\le r-1).
\end{equation}
On the other hand, suppose that there exists $\mathbf{x}:=(x_1\klk
x_r)\in\fq^r$ with pairwise--distinct coordinates such that
(\ref{eq: system defining Vr}) holds and set
$\mathcal{X}_r:=\{x_1\klk x_r\}$. Then the remainder of the division
of $f_{\bfs{a}}$ by $Q(\bfs{x},T)=(T-x_1)\cdots (T-x_r)$ is a
polynomial $r_{\bfs{a}}:=R_{\bfs{a}}(\bfs{x},T)$ of degree at most
$d-s-1$. This shows that $\mathcal{S}_{\mathcal{X}_r}^{\bfs{a}}$ is
not empty. We summarize the conclusions of the argumentation above
in the following result.
\begin{lemma}\label{lemma: system defining Vr}
Let $s,d\in\N$ with $1\le s\le d-2$, let $R_j^{\bfs{a}}$ $(d-s\le
j\le r-1)$ be the polynomials of (\ref{eq: system defining Vr}) and
let $\mathcal{X}_r:=\{x_1\klk x_r\}\subset\fq$ be a set with $r$
elements. Then $\mathcal{S}_{\mathcal{X}_r}^{\bfs{a}}$ is not empty
if and only if (\ref{eq: system defining Vr}) holds.
\end{lemma}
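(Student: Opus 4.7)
The plan is to formalize the chain of equivalences sketched in the text preceding the statement, pivoting on the Euclidean division of $f_{\bfs{a}}$ by $Q(\bfs{x},T):=(T-x_1)\cdots(T-x_r)$ in $\fq[T]$. Since $x_1\klk x_r$ are pairwise distinct, a polynomial $h\in\fq[T]$ vanishes at all of $x_1\klk x_r$ if and only if $Q(\bfs{x},T)$ divides $h$. Applied to $h:=f_{\bfs{a}}+g$, this rephrases the definition of $\mathcal{S}_{\mathcal{X}_r}^{\bfs{a}}$ as: a polynomial $g\in\fq[T]_{d-s-1}$ belongs to $\mathcal{S}_{\mathcal{X}_r}^{\bfs{a}}$ if and only if $g\equiv -f_{\bfs{a}}\pmod{Q(\bfs{x},T)}$.

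Next, I would specialize the identity $f_{\bfs{a}}\equiv R_{\bfs{a}}\pmod Q$ from (\ref{eq: congruence mod Q}), which is a valid division identity in $\fq[\bfs{X}][T]$ because $Q$ is monic in $T$. Substituting $\bfs{X}\mapsto \bfs{x}$ gives the congruence $f_{\bfs{a}}(T)\equiv r_{\bfs{a}}(T)\pmod{Q(\bfs{x},T)}$ in $\fq[T]$, where $r_{\bfs{a}}(T):=R_{\bfs{a}}(\bfs{x},T)$ has degree at most $r-1$. Combining with the previous step, $\mathcal{S}_{\mathcal{X}_r}^{\bfs{a}}\neq\emptyset$ is equivalent to the existence of some $g\in\fq[T]_{d-s-1}$ with $g\equiv -r_{\bfs{a}}\pmod{Q(\bfs{x},T)}$.

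The final step is a degree count. Since $\deg_T Q(\bfs{x},T)=r$ and $\deg r_{\bfs{a}}\le r-1$, the unique polynomial of degree at most $r-1$ in the class of $-r_{\bfs{a}}$ modulo $Q(\bfs{x},T)$ is $-r_{\bfs{a}}$ itself; any other representative has degree $\ge r>d-s-1$ and therefore cannot lie in $\fq[T]_{d-s-1}$. Hence the existence of the desired $g$ reduces to the condition $\deg r_{\bfs{a}}\le d-s-1$, which, reading off the coefficients of $r_{\bfs{a}}$, is precisely $R_j^{\bfs{a}}(\bfs{x})=0$ for $d-s\le j\le r-1$, i.e., (\ref{eq: system defining Vr}).

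There is essentially no obstacle; the proof amounts to bookkeeping with the division algorithm and the uniqueness of the degree-$\le r-1$ remainder. The only subtle point is handling the two regimes uniformly: when $r\ge d-s+1$ the argument above applies verbatim, while when $r\le d-s$ the system (\ref{eq: system defining Vr}) is vacuous, and indeed $r_{\bfs{a}}$ automatically has degree $\le r-1\le d-s-1$, so that $g:=-r_{\bfs{a}}\in\fq[T]_{d-s-1}$ witnesses $\mathcal{S}_{\mathcal{X}_r}^{\bfs{a}}\neq\emptyset$, in agreement with (\ref{eq: S(X_r)(f) for r le n}). Both directions of the biconditional therefore hold in the full stated range $1\le s\le d-2$.
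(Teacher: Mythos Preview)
Your proof is correct and follows essentially the same approach as the paper, which merely collects the argumentation preceding the lemma into a statement; both reduce membership in $\mathcal{S}_{\mathcal{X}_r}^{\bfs{a}}$ to a divisibility by $Q(\bfs{x},T)$, identify the remainder with $R_{\bfs{a}}(\bfs{x},T)$ via specialization of (\ref{eq: congruence mod Q}), and conclude by a degree count. Your treatment is in fact slightly more careful, explicitly noting that the division identity is valid because $Q$ is monic in $T$ and handling the vacuous case $r\le d-s$ separately.
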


It follows that the number $\chi_r^{\bfs{a}}$ of sets
$\mathcal{X}_r\subset \fq$ of $r$ elements such that
$\mathcal{S}_{\mathcal{X}_r}^{\bfs{a}}$ is not empty equals the
number of points $\bfs{x}:=(x_1,\ldots, x_r)\in\fq^r$ with
pairwise--distinct coordinates satisfying (\ref{eq: system defining
Vr}), up to permutations of coordinates, namely $1/r!$ times the
number of solutions $\bfs{x}\in\fq^r$ of the following system of
equalities and non-equalities:
\begin{equation}\label{eq: system rational solutions Vr}
R_j^{\bfs{a}}(X_1,\ldots, X_r)=0\quad(d-s\le j\le r-1),\ \prod_{1\le
i<j\le r}(X_i-X_j)\not=0.
\end{equation}
%
%
\subsection{$R_{\bfs{a}}$ in terms of the elementary symmetric polynomials}
Fix $r$ with $d-s+1\le r\le d$. Assume that $2(s+1)\le d$ holds and
consider the elementary symmetric polynomials $\Pi_1 ,\ldots, \Pi_r$
of $\fq[X_1,\ldots, X_r]$. For convenience of notation, we shall
denote $\Pi_0:= 1$. In Section \ref{subsec: relating interpol to rat
points} we obtain polynomials $R_j^{\bfs{a}}\in \fq[X_1,\ldots,
X_r]$ ($d-s\le j\le r-1$) with the following property: for a given
set $\mathcal{X}_r:=\{x_1\klk x_r\}\subset\fq$ of $r$ elements, the
set $\mathcal{S}_{\mathcal{X}_r}^{\bfs{a}}$ is not empty if and only
if $(x_1\klk x_r)$ is a common zero of $R_{d-s}^{\bfs{a}}\klk
R_{r-1}^{\bfs{a}}$.

The main purpose of this section is to show how the polynomials
$R_j^{\bfs{a}}$ can be expressed in terms of the elementary
symmetric polynomials $\Pi_1 ,\ldots, \Pi_s$. In order to do this,
we first obtain a recursive expression for the remainder of the
division of $T^j$ by $Q:=(T-X_1)\cdots (T-X_r)$ for $r\le j\le d$.
\begin{lemma}\label{lemma: formula Hij}
For $r\le j\le d$, the following congruence relation holds:
\begin{equation}
\label{eq: reduced expression T j} T^j\equiv
H_{r-1,j}T^{r-1}+H_{r-2,j}T^{r-2}+\cdots+H_{0,j}\mod Q,
\end{equation}
where each $H_{i,j}$ is equal to zero or an homogeneous element of
$\fq[X_1\klk X_r]$ of degree $j-i$. Furthermore, for $j-i\le r$, the
polynomial $H_{i,j}$ is a monic element of
$\fq[\Pi_1\klk\Pi_{j-i-1}][\Pi_{j-i}]$, up to a nonzero constant of
$\fq$.
\end{lemma}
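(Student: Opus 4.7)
The plan is to prove both assertions simultaneously by induction on $j$, starting from $j=r$. For the base case, expanding
$$Q=(T-X_1)\cdots(T-X_r)=\sum_{k=0}^{r}(-1)^{k}\Pi_k\,T^{r-k}$$
gives $T^r \equiv \sum_{k=1}^{r}(-1)^{k-1}\Pi_k\,T^{r-k}\pmod Q$, so $H_{i,r}=(-1)^{r-1-i}\Pi_{r-i}$ for $0\le i\le r-1$. Each such $H_{i,r}$ is homogeneous of degree $r-i$ and is, up to the nonzero scalar $(-1)^{r-1-i}$, the monic linear polynomial $\Pi_{r-i}$ in the variable $\Pi_{j-i}=\Pi_{r-i}$, so both assertions hold at $j=r$.

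For the inductive step, I would multiply the congruence \eqref{eq: reduced expression T j} by $T$ and replace the resulting $T^r$ using the base case. A short computation yields the recursion
$$H_{i,j+1}=H_{i-1,j}+(-1)^{r-1-i}\,\Pi_{r-i}\,H_{r-1,j}\qquad(0\le i\le r-1),$$
with the convention $H_{-1,j}:=0$. Homogeneity is immediate: by the induction hypothesis $H_{i-1,j}$ is zero or homogeneous of degree $j+1-i$, while $\Pi_{r-i}\,H_{r-1,j}$ has degree $(r-i)+(j-r+1)=j+1-i$; hence $H_{i,j+1}$ is zero or homogeneous of the required degree.

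The main obstacle is propagating the monicness claim, which I would settle by a degree count. Because $H_{i,j+1}$ has total degree $j+1-i$ and $\Pi_{j+1-i}$ also has degree $j+1-i$, no monomial $\Pi_{j+1-i}^m$ with $m\ge 2$ can occur in $H_{i,j+1}$. The monicness assertion therefore reduces to showing that the coefficient of $\Pi_{j+1-i}$ in $H_{i,j+1}$ is a nonzero element of $\fq$. Assume $j+1-i\le r$: the case $i=0$ would force $j+1\le r$, contradicting $j\ge r$, so we may suppose $i\ge 1$. Then $j-(i-1)=j+1-i\le r$ as well, and by induction
$$H_{i-1,j}=c\,\Pi_{j+1-i}+\bigl(\text{a polynomial in }\Pi_1\klk\Pi_{j-i}\bigr)$$
for some $c\in\fq\setminus\{0\}$. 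The key observation is that the second summand in the recursion contributes nothing to the coefficient of $\Pi_{j+1-i}$: indeed, $H_{r-1,j}$ has total degree $j-r+1$, which is strictly less than $j+1-i$ since $i\le r-1$, so $H_{r-1,j}$ cannot involve $\Pi_{j+1-i}$; and $r-i<j+1-i$ because $j\ge r$, so multiplication by $\Pi_{r-i}$ does not introduce $\Pi_{j+1-i}$ either. Therefore the coefficient of $\Pi_{j+1-i}$ in $H_{i,j+1}$ equals $c\in\fq\setminus\{0\}$, completing the induction.
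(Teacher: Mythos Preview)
Your proof is correct and follows essentially the same approach as the paper: the same base case, the same recursion $H_{i,j+1}=H_{i-1,j}+(-1)^{r-1-i}\Pi_{r-i}H_{r-1,j}$, and the same idea that the second summand lies in $\fq[\Pi_1\klk\Pi_{j-i}]$ while the first contributes the monic $\Pi_{j+1-i}$--term. The only cosmetic difference is that you phrase the last step as a total--degree argument (``$H_{r-1,j}$ has degree $j-r+1<j+1-i$, so it cannot involve $\Pi_{j+1-i}$''), whereas the paper invokes the inductive hypothesis directly to place $\Pi_{r-i}H_{r-1,j}$ in $\fq[\Pi_1\klk\Pi_{j-i}]$; note that your degree argument implicitly uses that $H_{r-1,j}$ is expressible through the $\Pi_k$'s, which is justified either by symmetry or by the inductive hypothesis (indeed $j+1-i\le r$ and $i\le r-1$ force $j-r+1\le r$, so the hypothesis applies to $H_{r-1,j}$).
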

\begin{proof}
We argue by induction on $j\ge r$. Taking into account that
\begin{equation}\label{eq: reduced expression T r+1}
T^r\equiv  \Pi_1T^{r-1}-\Pi_2T^{r-2}+\cdots+(-1)^{r-1}\Pi_r
\mod{Q},\end{equation}
we immediately deduce (\ref{eq: reduced expression T j}) for $j=r$.

Next assume that (\ref{eq: reduced expression T j}) holds for a
given $j$ with $r\le j$. Multiplying both sides of (\ref{eq: reduced
expression T j}) by $T$ and combining with (\ref{eq: reduced
expression T r+1}) we obtain:
\begin{eqnarray*}
T^{j+1}&\equiv& H_{r-1,j}T^r+H_{r-2,j}T^{r-1}+
\cdots+H_{0,j}T\\
&\equiv&(\Pi_1H_{r-1,j}+
H_{r-2,j})T^{r-1}+\cdots+((-1)^{r-2}\Pi_{r-1}H_{r-1,j}+H_{0,j})T\\
&&+(-1)^{r-1}\Pi_rH_{r-1,j},
\end{eqnarray*}
where both congruences are taken modulo $Q$.

Define
\begin{eqnarray*}
H_{k,j+1}&:=&(-1)^{r-1-k}\Pi_{r-k}H_{r-1,j}+H_{k-1,j}\ \textit{for}\
1\le k\le r-1,\\H_{0,j+1}&:=&(-1)^{r-1}\Pi_rH_{r-1,j}.
\end{eqnarray*} Then we have
$$T^{j+1}\equiv H_{r-1,j+1}T^{r-1}+H_{r-2,j+1}T^{r-2}+\cdots+H_{0,j+1}
\mod{Q}.$$
There remains to prove that the polynomials $H_{k,j+1}$ have the
form asserted.

Fix $k$ with $1\le k\le r-1$. Then
$H_{k,j+1}=(-1)^{r-1-k}\Pi_{r-k}H_{r-1,j}+H_{k-1,j}$. By the
inductive hypothesis we have that $H_{r-1,j}$ and $H_{k-1,j}$ are
equal to zero or homogeneous polynomials of degree $j-r+1$ and
$j-k+1$ respectively. We easily conclude that $H_{k,j+1}$ is equal
to zero or homogeneous of degree $j-k+1$. Further, for $j+1-k\le r$,
since $\max\{r-k,j-r+1\}\le j-k<r$ we see that $\Pi_{r-k}H_{r-1,j}$
is an element of the polynomial ring $\fq[\Pi_1\klk\Pi_{j-k}]$. On
the other hand, $H_{k-1,j}$ is a monic element of
$\fq[\Pi_1\klk\Pi_{j-k}][\Pi_{j-k+1}]$, up to a nonzero constant of
$\fq$, which implies that so is $H_{k,j+1}$.

Finally, for $k=0$ we have $H_{0,j+1}:=(-1)^{r-1}\Pi_rH_{r-1,j}$,
which shows that $H_{0,j+1}$ is equal to zero or an homogeneous
polynomials of $\fq[X_1\klk X_r]$ of degree $r+j-r+1=j+1$. This
finishes the proof of the lemma.
\end{proof}

We observe that an explicit expression of the polynomials $H_{i,j}$
can be obtained following the approach of \cite[Proposition
2.2]{CaMaPr12}. As we do not need such an explicit expression we
shall not pursue this point any further.

Finally we obtain an expression of the polynomials $R_j^{\bfs{a}}\in
\fq[X_1,\ldots, X_r]$ ($d-s\le j\le r-1$) in terms of the
polynomials $H_{i,j}$.
\begin{proposition}  \label{prop: formula for Rj}
Let $s,d\in\N$ with $1\le s\le d-2$ and $2(s+1)\le d$. For $d-s\le
j\le r-1$, the following identity holds:
\begin{equation} \label{eq: expression for Rj}
R_j^{\bfs{a}}=a_j+\sum_{i=r}^da_iH_{j,i},
\end{equation}
where the polynomials $H_{j,i}$ are defined in Lemma \ref{lemma:
formula Hij}. In particular, $R_j^{\bfs{a}}$ is a monic element of
$\fq[\Pi_1\klk \Pi_{d-1-j}][\Pi_{d-j}]$ of degree $d-j\le s$ for
$d-s\le j\le r-1$.
\end{proposition}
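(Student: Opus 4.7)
The plan is to derive the identity (\ref{eq: expression for Rj}) by direct substitution from Lemma \ref{lemma: formula Hij} into $f_{\bfs{a}}$, and then to extract the monic/degree statement by tracking which $H_{j,i}$ contributes the highest elementary symmetric polynomial $\Pi_{d-j}$.

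Setting $a_d:=1$ gives $f_{\bfs{a}}=\sum_{i=d-s}^{d}a_iT^i$. I would split this sum at $i=r-1$: the terms with $d-s\le i\le r-1$ already have degree strictly less than $r$ and therefore coincide with their own remainder modulo $Q$, while the terms with $r\le i\le d$ can be rewritten via Lemma \ref{lemma: formula Hij} as $\sum_{k=0}^{r-1}H_{k,i}T^k$. Collecting by powers of $T$ and invoking the uniqueness of the Euclidean remainder modulo the monic polynomial $Q$ of degree $r$, the coefficient of $T^j$ in $R_{\bfs{a}}$ for $d-s\le j\le r-1$ equals exactly $a_j+\sum_{i=r}^d a_iH_{j,i}$, which is (\ref{eq: expression for Rj}). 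The assumption $r\ge d-s+1$ guarantees $d-s\le r-1$, so this range of $j$ is non-empty.

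For the structural claim, the key observation is that the hypotheses $2(s+1)\le d$ and $r\ge d-s+1$ imply $r>s$, and hence for every $j$ with $d-s\le j\le r-1$ and every $i$ with $r\le i\le d$ one has $i-j\le d-j\le s<r$. This triggers the second part of Lemma \ref{lemma: formula Hij}: each $H_{j,i}$ is homogeneous of degree $i-j$ in $X_1\klk X_r$ and is monic (up to a nonzero scalar in $\fq$) in $\fq[\Pi_1\klk \Pi_{i-j-1}][\Pi_{i-j}]$. Consequently, for $r\le i\le d-1$ one has $H_{j,i}\in\fq[\Pi_1\klk\Pi_{d-j-1}]$, so the variable $\Pi_{d-j}$ does not occur in these summands, while $a_dH_{j,d}=H_{j,d}$ is the unique summand contributing $\Pi_{d-j}$, with leading coefficient in $\fq^{\times}$. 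This exhibits $R_j^{\bfs{a}}$ as a monic element of $\fq[\Pi_1\klk\Pi_{d-j-1}][\Pi_{d-j}]$ (after rescaling by a nonzero constant), and a total-degree count in $X_1\klk X_r$ gives its degree as $d-j$, which is at most $s$ since $j\ge d-s$.

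The only nontrivial point is the bookkeeping that guarantees $i-j\le r$ throughout the relevant range so that Lemma \ref{lemma: formula Hij} supplies the needed monic structure, and that no term with $i<d$ reintroduces a $\Pi_{d-j}$ dependence. Both facts are immediate consequences of the degree information in the lemma, so no real obstacle is anticipated.
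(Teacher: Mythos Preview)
Your proposal is correct and follows essentially the same route as the paper: split $f_{\bfs{a}}$ at the index $r$, reduce the high-degree monomials via Lemma \ref{lemma: formula Hij}, read off the coefficient of $T^j$ by uniqueness of the remainder, and then use the bound $i-j\le s<r$ to invoke the structural part of the lemma and isolate $H_{j,d}$ as the sole contributor of $\Pi_{d-j}$. The paper phrases the key inequality as $i-j\le s\le d-s-2\le r$ rather than your $i-j\le s<r$, but both chains are immediate from $2(s+1)\le d$ and $r\ge d-s+1$, and the remainder of the argument is identical.
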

\begin{proof}
By Lemma \ref{lemma: formula Hij} we have the following congruence
relation for $r\le j\le d$:
$$T^j\equiv
H_{r-1,j}T^{r-1}+H_{r-2,j}T^{r-2}+\cdots+H_{0,j} \mod{Q}.$$
Hence we obtain
\begin{eqnarray*}
\sum_{j=d-s}^da_jT^j&=&\sum_{j=d-s}^{r-1}a_jT^j+\sum_{j=r}^da_jT^j\\
&\equiv& \sum_{j=d-s}^{r-1}a_jT^j+\sum_{j=r}^da_j
\sum_{i=d-s}^{r-1}H_{i,j}T^i+\mathcal{O}(T^{d-s-1}) \mod{Q}\\
&\equiv& \sum_{j=d-s}^{r-1}\bigg(a_j+
\sum_{i=r}^da_iH_{j,i}\bigg)T^j+\mathcal{O}(T^{d-s-1}) \mod{Q},
\end{eqnarray*}
where $\mathcal{O}(T^{d-s-1})$ represents a sum of terms of
$\fq[X_1\klk X_r][T]$ of degree at most $d-s-1$ in $T$. This shows
that the polynomials $R_j^{\bfs{a}}$ have the form asserted in
(\ref{eq: expression for Rj}). Furthermore, we observe that, for
each $H_{j,i}$ occurring in (\ref{eq: expression for Rj}), we have
$i-j\le s\le d-s-2\le r$. This implies that each $H_{j,i}$ is a
monic element of $\fq[\Pi_1\klk \Pi_{i-j-1}][\Pi_{i-j}]$ of degree
$i-j$. As a consequence, we see that $R_j^{\bfs{a}}$ is a monic
element of $\fq[\Pi_1\klk \Pi_{d-1-j}][\Pi_{d-j}]$ of degree $d-j$
for $d-s\le j\le r-1$. This finishes the proof.\end{proof}
%
%
\section{The geometry of the set of zeros of $R_{d-s}^{\bfs{a}}\klk R_{r-1}^{\bfs{a}}$}
\label{section: geometry of Vr}
For positive integers $s$, $d$ with $q<d$, $1\le s\le d-2$ and
$2(s+1)\le d$, we fix as in the previous section an $s$--tuple
$\boldsymbol{a}:=(a_{d-1}\klk a_{d-s})\in\fq^s$ and consider the
polynomial $f_{\bfs{a}}:=T^d+a_{d-1}T^{d-1}\plp a_{d-s}T^{d-s}$. For
fixed $r$ with $d-s+1\le r\le d$, in Section \ref{subsec: relating
interpol to rat points} we associate to $f_{\bfs{a}}$ polynomials
$R_j^{\bfs{a}}\in \fq[X_1,\ldots, X_r]$ ($d-s\le j\le r-1$), whose
sets of common $q$--rational zeros are relevant for our purposes.

According to Proposition \ref{prop: formula for Rj}, we may express
each $R_j^{\bfs{a}}$ as a polynomial in the first $s$ elementary
symmetric polynomials $\Pi_1,\ldots,\Pi_s$ of $\fq[X_1,\ldots,X_r]$.
More precisely, let $Y_1\klk Y_s$ be new indeterminates over $\cfq$.
Then we have that
$$R_j^{\bfs{a}}=S_j^{\bfs{a}}(\Pi_1,\ldots,\Pi_{d-j})\quad (d-s\le j\le r-1),$$
where each $S_j^{\bfs{a}}\in \fq[Y_1,\ldots,Y_{d-j}]$ is a monic
element of $\fq[Y_1,\ldots,Y_{d-1-j}][Y_{d-j}]$ of degree $1$ in
$Y_{d-j}$.

In this section we obtain critical information on the geometry of
the set of common zeros of the polynomials $R_j^{\bfs{a}}$ that will
allow us to establish estimates on the number of common
$q$--rational zeros of $R_{d-s}^{\bfs{a}}\klk R_{r-1}^{\bfs{a}}$.
%
%
\subsection{Notions of algebraic geometry}\label{subsec: alg geom}
Since our approach relies on tools of algebraic geometry, we briefly
collect the basic definitions and facts that we need in the sequel.
We use standard notions and notations of algebraic geometry, which
can be found in, e.g., \cite{Kunz85}, \cite{Shafarevich94}.

We denote by $\A^n$ the affine $n$--dimensional space $\cfq{\!}^{n}$
and by $\Pp^n$ the projective $n$--dimensional space over
$\cfq{\!}^{n+1}$. Both spaces are endowed with their respective
Zariski topologies, for which a closed set is the zero locus of
polynomials of $\cfq[X_1,\ldots, X_{n}]$ or of homogeneous
polynomials of  $\cfq[X_0,\ldots, X_{n}]$. For $\K:=\fq$ or
$\K:=\cfq$, we say that a subset $V\subset \A^n$ is an {\sf affine
$\K$--variety} if it is the set of common zeros in $\A^n$ of
polynomials $F_1,\ldots, F_{m} \in \K[X_1,\ldots, X_{n}]$.
Correspondingly, a {\sf projective $\K$--variety} is the set of
common zeros in $\Pp^n$ of a family of homogeneous polynomials
$F_1,\ldots, F_m \in\K[X_0 ,\ldots, X_n]$. We shall frequently
denote by $V(F_1\klk F_m)$ the affine or projective $\K$--variety
consisting of the common zeros of polynomials $F_1\klk F_m$. The set
$V(\fq):=V\cap \fq^n$ is the set of {\sf $q$--rational points} of
$V$.

A $\K$--variety $V$ is $\K$--{\sf irreducible} if it cannot be
expressed as a finite union of proper $\K$--subvarieties of $V$.
Further, $V$ is {\sf absolutely irreducible} if it is irreducible as
a $\cfq$--variety. Any $\K$--variety $V$ can be expressed as an
irredundant union $V=\mathcal{C}_1\cup \cdots\cup\mathcal{C}_s$ of
irreducible (absolutely irreducible) $\K$--varieties, unique up to
reordering, which are called the {\sf irreducible} ({\sf absolutely
irreducible}) $\K$--{\sf components} of $V$.


For a $\K$-variety $V$ contained in $\A^n$ or $\Pp^n$, we denote by
$I(V)$ its {\sf defining ideal}, namely the set of polynomials of
$\K[X_1,\ldots, X_n]$, or of $\K[X_0,\ldots, X_n]$, vanishing on
$V$. The {\sf coordinate ring} $\K[V]$ of $V$ is defined as the
quotient ring $\K[X_1,\ldots,X_n]/I(V)$ or
$\K[X_0,\ldots,X_n]/I(V)$. The {\sf dimension} $\dim V$ of a
$\K$-variety $V$ is the length $r$ of the longest chain
$V_0\varsubsetneq V_1 \varsubsetneq\cdots \varsubsetneq V_r$ of
nonempty irreducible $\K$-varieties contained in $V$. A
$\K$--variety is called {\sf equidimensional} if all its irreducible
$\K$--components are of the same dimension.

The {\sf degree} $\deg V$ of an irreducible $\K$-variety $V$ is the
maximum number of points lying in the intersection of $V$ with a
generic linear space $L$ of codimension $\dim V$, for which $V\cap
L$ is a finite set. More generally, following \cite{Heintz83} (see
also \cite{Fulton84}), if $V=\mathcal{C}_1\cup\cdots\cup
\mathcal{C}_s$ is the decomposition of $V$ into irreducible
$\K$--components, we define the degree of $V$ as
$$\deg V:=\sum_{i=1}^s\deg \mathcal{C}_i.$$
An important tool for our estimates is the following {\em B\'ezout
inequality} (see \cite{Heintz83}, \cite{Fulton84}, \cite{Vogel84}):
if $V$ and $W$ are $\K$--varieties, then the following inequality
holds:
\begin{equation}\label{equation:Bezout}
\deg (V\cap W)\le \deg V \cdot \deg W.
\end{equation}

We shall also make use of the following well--known identities
relating the degree of an affine $\K$--variety $V \subset \A^n$, the
degree of its projective closure (with respect to the projective
Zariski $\K$--topology) $\overline{V} \subset \Pp^n$ and the degree
of the affine cone $\widetilde{V}$ of $\overline{V}$ (see, e.g.,
\cite[Proposition 1.11]{CaGaHe91}):
%
$$\deg V= \deg \overline{V}= \deg\tilde{V}.$$

Elements $F_1 \klk F_{n-r}$ in $\K[X_1\klk X_n]$ or in $\K[X_0\klk
X_n]$ form a {\sf regular sequence} if $F_1$ is nonzero and each
$F_i$ is not a zero divisor in the quotient ring $\K[X_1\klk
X_n]/(F_1\klk F_{i-1})$ or $\K[X_0\klk X_n]/(F_1\klk F_{i-1})$ for
$2\le i\le n-r$. In such a case, the (affine or projective)
$\K$--variety $V:=V(F_1\klk F_{n-r})$ they define is equidimensional
of dimension $r$, and is called a {\sf set--theoretic} {\sf complete
intersection}. If the ideal $(F_1\klk F_{n-r})$ generated by
$F_1\klk F_{n-r}$ is radical, then we say that $V$ is an {\sf
ideal--theoretic} {\sf complete intersection}. If $V\subset\Pp^n$ is
an ideal--theoretic complete intersection defined over $\K$, of
dimension $r$ and degree $\delta$, and $F_1 \klk F_{n-r}$ is a
system of generators of $I(V)$, the degrees $d_1\klk d_{n-r}$ depend
only on $V$ and not on the system of generators. Arranging the $d_i$
in such a way that $d_1\geq d_2 \geq \cdots \geq d_{n-r}$, we call
$\boldsymbol{d}:=(d_1\klk d_{n-r})$ the {\sf multidegree} of $V$. In
particular, it follows that $\delta= \prod_{i=1}^{n-r}d_i$ holds.

Let $V$ be a variety contained in $\A^n$ and let $I(V)\subset
\cfq[X_1,\ldots, X_n]$ be the defining ideal of $V$. Let $\bfs{x}$
be a point of $V$. The {\sf dimension} $\dim_{\bfs{x}}V$ {\sf of}
$V$ {\sf at} $\bfs{x}$ is the maximum of the dimensions of the
irreducible components of $V$ that contain $\bfs{x}$. If
$I(V)=(F_1,\ldots, F_m)$, the {\sf tangent space}
$\mathcal{T}_{\bfs{x}}V$ to $V$ at $\bfs{x}$ is the kernel of the
Jacobian matrix $(\partial F_i/\partial X_j)_{1\le i\le m,1\le j\le
n}(\bfs{x})$ of the polynomials $F_1,\ldots, F_m$ with respect to
$X_1,\ldots, X_n$ at $\bfs{x}$. The point $\bfs{x}$ is {\sf regular}
if $\dim\mathcal{T}_{\bfs{x}}V=\dim_{\bfs{x}}V$ holds. Otherwise,
the point $\bfs{x}$ is called {\sf singular}. The set of singular
points of $V$ is the {\sf singular locus} $\mathrm{Sing}(V)$ of $V$.
A variety is called {\sf nonsingular} if its singular locus is
empty. For a projective variety, the concepts of tangent space,
regular and singular point can be defined by considering an affine
neighborhood of the point under consideration.

Let $V$ and $W$ be irreducibles $\K$--varieties of the same
dimension and let $f:V\to W$ be a regular map for which
$\overline{f(V)}=W$ holds, where $\overline{f(V)}$ denotes the
closure of $f(V)$ with respect to the Zariski topology of $W$. Then
$f$ induces a ring extension $\K[W]\hookrightarrow \K[V]$ by
composition with $f$. We say that $f$ is a {\sf finite morphism} if
this extension is integral, namely if each element $\eta\in\K[V]$
satisfies a monic equation with coefficients in $\K[W]$. A basic
fact is that a finite morphism is necessarily closed. Another fact
concerning finite morphisms we shall use in the sequel is that the
preimage $f^{-1}(S)$ of an irreducible closed subset $S\subset W$ is
equidimensional of dimension $\dim S$.
%
%
\subsection{The singular locus of symmetric complete intersections}
\label{subsec: singular locus symmetric complete inters}
With the notations and assumptions of the beginning of Section
\ref{section: geometry of Vr}, let $V_r^{\bfs{a}}\subset \A^r$ be
the affine $\fq$--variety defined by the polynomials
$R_{d-s}^{\bfs{a}}\klk R_{r-1}^{\bfs{a}}\in\fq[X_1\klk X_r]$. In
this section we shall establish several facts concerning the
geometry of $V_r^{\bfs{a}}$. For this purpose, we consider the
somewhat more general framework that we now introduce. This will
allow us to make more transparent the facts concerning the algebraic
structure of the family of polynomials $R_{d-s}^{\bfs{a}}\klk
R_{r-1}^{\bfs{a}}$ which are important at this point.

Let $Y_1,\ldots, Y_s$ be new indeterminates over $\cfq$ and let be
given polynomials $S_j\in\fq[Y_1,\ldots,Y_s]$ for $d-s\le j\le r-1$.
Let $(\partial \bfs{S}/\partial \bfs{Y}):=(\partial S_j/\partial
Y_k)_{d-s\le j\le r-1,1\le k\le s}$ be the Jacobian matrix of
$S_{d-s}\klk S_{r-1}$ with respect to $Y_1\klk Y_s$. Our assumptions
on $s$, $d$ and $r$ imply $r-d+s\le s$ and thus, $(\partial
\bfs{S}/\partial \bfs{Y})$ has full rank if and only if
$\mathrm{rank}(\partial \bfs{S}/\partial \bfs{Y})=r-d+s$ holds.
Assume that $S_{d-s}\klk S_{r-1}$ satisfy the following conditions:
\begin{itemize}
  \item[(H1)] $S_{d-s}\klk S_{r-1}$ form a regular sequence of $\fq[Y_1\klk
  Y_s]$;
  \item[(H2)] $(\partial \bfs{S}/\partial \bfs{Y})(\bfs{y})$ has full rank $r-d+s$ for
every $\bfs{y}\in\A^s$.
\end{itemize}
From (H1) and (H2) we immediately conclude that the affine variety
$W_r\subset\A^s$ defined by $S_{d-s}\klk S_{r-1}$ is a nonsingular
set--theoretic complete intersection of dimension $d-r$.
Furthermore, as a consequence of \cite[Theorem 18.15]{Eisenbud95} we
conclude that $S_{d-s}\klk S_{r-1}$ define a radical ideal, and
hence $W_r$ is an ideal--theoretic complete intersection.

Denote by $\Pi_1,\ldots,\Pi_s$ the first $s$ elementary symmetric
polynomials of $\fq[X_1,\ldots, X_r]$ and let
$R_j:=S_j(\Pi_1,\ldots, \Pi_s)$ for $d-s\le j\le r-1$. We denote by
$V_r\subset\A^r$ the affine variety defined by $R_{d-s}\klk
R_{r-1}$. In what follows we shall establish several facts
concerning the geometry of $V_r$.

For this purpose, we consider the following surjective morphism of
$\fq$--varieties:
\begin{eqnarray*}
   {\Pi}^{\bfs{r}}: \A^r & \rightarrow & \A^r
   \\
   \bfs{x} & \mapsto & (\Pi_1(\bfs{x}),\ldots,\Pi_r(\bfs{x})).
\end{eqnarray*}
It is easy to see that $\Pi^{\bfs{r}}$ is finite morphism (see,
e.g., \cite[\S 5.3, Example 1]{Shafarevich94}). In particular, the
preimage $(\Pi^{\bfs{r}})^{-1}(Z)$ of an irreducible affine variety
$Z\subset\A^r$ of dimension $m$ is equidimensional and of dimension
$m$ (see, e.g., \cite[\S 4.2, Proposition]{Danilov94}).

We now consider $S_{d-s}\klk S_{r-1}$ as elements of $\fq[Y_1\klk
Y_r]$. Since they form a regular sequence, the affine variety
$W_j^{\bfs{r}}=V(S_{d-s}\klk S_j)\subset\A^r$ is equidimensional of
dimension $r-j+d-s-1$. This implies that the affine variety
$V_j^{\bfs{r}}=(\Pi^{\bfs{r}})^{-1}(W_j^{\bfs{r}})$ defined by
$R_{d-s}\klk R_j$ is equidimensional of dimension $r-j+d-s-1$. We
conclude that the polynomials $R_{d-s}\klk R_{r-1}$ form a regular
sequence of $\fq[X_1\klk X_r]$ and deduce the following result.
\begin{lemma}\label{lemma: Vr is complete inters}
Let $V_r\subset \A^r$ be the $\fq$--variety defined by $R_{d-s}\klk
R_{r-1}$. Then $V_r$ is a set--theoretic complete intersection of
dimension $d-s$.
\end{lemma}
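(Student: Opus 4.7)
The plan is to exploit the finite morphism $\Pi^{\bfs{r}}:\A^r\to\A^r$ introduced just above, which sends $\bfs{x}$ to $(\Pi_1(\bfs{x})\klk\Pi_r(\bfs{x}))$, and to pull back a dimension/regular--sequence statement from a product variety defined by the $S_j$'s to the variety $V_r$ defined by the $R_j:=S_j(\Pi_1\klk\Pi_s)$.

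First I would view $S_{d-s}\klk S_{r-1}$ as polynomials in the larger ring $\fq[Y_1\klk Y_r]$. Since they only involve $Y_1\klk Y_s$, the variety they cut out in $\A^r$ is the product $W_r\times\A^{r-s}$, where $W_r\subset\A^s$ is the nonsingular ideal--theoretic complete intersection provided by hypotheses (H1) and (H2), of dimension $d-r$. Consequently $W_{r-1}^{\bfs{r}}:=V(S_{d-s}\klk S_{r-1})\subset\A^r$ is equidimensional of dimension $(d-r)+(r-s)=d-s$, and by the same product argument applied to each partial variety, $W_j^{\bfs{r}}:=V(S_{d-s}\klk S_j)$ is equidimensional of dimension $r-j+d-s-1$ for $d-s\le j\le r-1$. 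This shows that $S_{d-s}\klk S_{r-1}$ remain a regular sequence after this trivial extension of the base ring.

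Next I would apply the fact, recorded earlier in the subsection, that $\Pi^{\bfs{r}}$ is a finite morphism and that the preimage under a finite morphism of an irreducible affine variety of dimension $m$ is equidimensional of dimension $m$. Applying this component--by--component to $W_{r-1}^{\bfs{r}}$ gives that $V_r=(\Pi^{\bfs{r}})^{-1}(W_{r-1}^{\bfs{r}})$ is equidimensional of dimension $d-s$; the same argument applied to the $W_j^{\bfs{r}}$ shows that the intermediate preimages $V_j^{\bfs{r}}:=(\Pi^{\bfs{r}})^{-1}(W_j^{\bfs{r}})=V(R_{d-s}\klk R_j)$ are equidimensional of dimension $r-j+d-s-1$, so that $R_{d-s}\klk R_{r-1}$ themselves form a regular sequence of $\fq[X_1\klk X_r]$.

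The last step is then purely numerical: $V_r\subset\A^r$ is cut out by the $r-d+s$ polynomials $R_{d-s}\klk R_{r-1}$, which matches the codimension $r-(d-s)$, so equidimensionality of dimension $d-s$ promotes $V_r$ to a set--theoretic complete intersection of the claimed dimension. I do not anticipate a serious obstacle; the only point that needs genuine care is the transfer of equidimensionality through $\Pi^{\bfs{r}}$, and this is precisely what the finiteness of the elementary--symmetric morphism delivers.
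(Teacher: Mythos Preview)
Your proposal is correct and follows essentially the same route as the paper: view the $S_j$ as a regular sequence in $\fq[Y_1\klk Y_r]$, pull back the equidimensional varieties $W_j^{\bfs{r}}$ through the finite morphism $\Pi^{\bfs{r}}$ to obtain equidimensionality of each $V_j^{\bfs{r}}$, and conclude that $R_{d-s}\klk R_{r-1}$ is a regular sequence defining a set--theoretic complete intersection of dimension $d-s$. Your explicit product decomposition $W_r\times\A^{r-s}$ and the component--by--component application of the preimage fact are minor elaborations of steps the paper leaves implicit.
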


Next we discuss the dimension of the singular locus of $V_r$. For
this purpose, we consider the following surjective morphism of
$\fq$--varieties:
 \begin{eqnarray*}
   \Pi : V_r & \rightarrow & W_r
   \\
   \bfs{x} & \mapsto & (\Pi_1(\bfs{x}),\ldots,\Pi_s(\bfs{x})).
 \end{eqnarray*}
For $\bfs{x}\in V_r$ and $\bfs{y}:=\Pi(\bfs{x})$, we denote by
$\mathcal{T}_{\bfs{x}}V_r$ and $\mathcal{T}_{\bfs{y}} W_r$ the
tangent spaces to $V_r$ at $\bfs{x}$ and to $W_r$ at $\bfs{y}$. We
also consider the differential map of $\Pi$ at $\bfs{x}$, namely
 \begin{eqnarray*}
   \mathrm{d}_{\bfs{x}}\Pi :  \mathcal{T}_{\bfs{x}}V_r &
   \rightarrow & \mathcal{T}_{\bfs{y}}W_r \\
   \bfs{v} & \mapsto & A(\bfs{x})\cdot \bfs{v},
 \end{eqnarray*}
where $A(\bfs{x})$ stands for the  $(s\times r)$--matrix
  \begin{equation} \label{eq: Jacobian Pi}
    A(\bfs{x}):=\left(\frac{\partial \Pi}{\partial
    \bfs{X}}\right)(\bfs{x}):=
       \left(
         \begin{array}{ccc}
           \dfrac{\partial\Pi_1}{\partial X_1}(\bfs{x}) & \cdots & \dfrac{\partial\Pi_1}{\partial X_r}(\bfs{x})
         \\
           \vdots & & \vdots
          \\
           \dfrac{\partial\Pi_s}{\partial X_1}(\bfs{x})& \cdots & \dfrac{\partial\Pi_s}{\partial X_r}(\bfs{x})
         \end{array}
       \right).
  \end{equation}
In order to prove our result about the singular locus of $V_r$, we
first make a few remarks concerning the Jacobian matrix of the
elementary  symmetric polynomials that will be useful in the sequel.

It is well known that the first partial derivatives of the
elementary symmetric polynomials $\Pi_i$  satisfy the following
equalities (see, e.g., \cite{LaPr02}) for $1\leq i,j \leq r$:
  \begin{equation}\label{eq:derivadas parciales simetricos elementales}
    \frac{\partial \Pi_i}{\partial X_{j}}= \Pi_{i-1}-X_{j} \Pi_{i-2}
    + X_{j}^2 \Pi_{i-3} +\cdots+ (-1)^{i-1} X_{j}^{i-1}.
  \end{equation}
As a consequence, denoting by $A_r$ the $(r\times r)$--Vandermonde
matrix
  \begin{equation}\label{eq:matriz de vandermonde k+1}
    A_r:=\left(
         \begin{array}{cccc}
           1 & 1 & \cdots & 1
         \\
           X_1 & X_2 & \cdots &  X_r
          \\  \vdots & \vdots & & \vdots
          \\
           X_1^{r-1} & X_2^{r-1} & \cdots & X_r^{r-1},
         \end{array}
       \right),
 \end{equation}
we deduce that the  Jacobian matrix of $\Pi_1,\dots,\Pi_r$ with
respect to $X_1,\dots,X_r$ can be factored as follows:
\begin{equation} \label{eq:factorizacion del jacobiano de los simetricos elementales}
     \left(\frac{\partial \Pi_i}{\partial X_j}\right)_{1\le i,j\le r}:=B_r\cdot
      A_r
       :=
        \left(
         \begin{array}{ccccc}
           1 & \ 0 & 0 &  \dots & 0
         \\
           \Pi_1 & - 1 & 0 &  &
          \\
           \Pi_2 & -\Pi_1 & 1 & \ddots & \vdots
          \\
           \vdots &\vdots  & \vdots & \ddots & 0
           \\
           \Pi_{r-1} & -\Pi_{r-2} & \Pi_{r-3} & \cdots &\!\! (-1)^{r-1}
         \end{array}
       \!\!\right)
     \cdot
         A_r.
  \end{equation}
We observe that the left factor $B_r$ is a square, lower--triangular
matrix whose determinant is equal to $(-1)^{{(r-1)r}/{2}}$. This
implies that the determinant of the matrix
$(\partial{\Pi_i}/\partial{X_j})_{1\leq i,j\leq r}$ is equal, up to
a sign, to the determinant of $A_r$, i.e.,
$$\det \left(\frac{\partial \Pi_i}{\partial X_j}\right)_{1\le i,j\le
r}=(-1)^{{(r-1)r}/{2}} \prod_{1\le i < j\le r}(X_i-X_j).$$

Let $(\partial \bfs{R}/\partial \bfs{X}):=(\partial R_j/\partial
X_k)_{d-s\le j\le r-1,1\leq k \le r}$ be the Jacobian matrix of the
polynomials $R_{d-s}\klk R_{r-1}$ with respect to $X_1\klk X_r$.
\begin{theorem}\label{theorem: dimension singular locus}
The set of points $\bfs{x}\in\A^r$ for which $(\partial
\bfs{R}/\partial \bfs{X})(\bfs{x})$ has not full rank has dimension
at most $s-1$. In particular, the singular locus $\Sigma_r$ of $V_r$
has dimension at most $s-1$.
\end{theorem}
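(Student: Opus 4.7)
The plan is to combine the chain rule with the Vandermonde factorization (\ref{eq:factorizacion del jacobiano de los simetricos elementales}) to reduce the rank question for $(\partial\bfs{R}/\partial\bfs{X})$ to a purely combinatorial condition on the coordinates of $\bfs{x}$. Since $R_j = S_j(\Pi_1,\ldots,\Pi_s)$, the chain rule gives
$$\left(\frac{\partial\bfs{R}}{\partial\bfs{X}}\right)(\bfs{x}) = \left(\frac{\partial\bfs{S}}{\partial\bfs{Y}}\right)(\Pi(\bfs{x}))\cdot M(\bfs{x}),$$
where $M(\bfs{x})$ is the $s\times r$ matrix with entries $(\partial\Pi_i/\partial X_k)(\bfs{x})$ for $1\le i\le s$, $1\le k\le r$. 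Hypothesis (H2) ensures that the left factor has full row rank $r-d+s$ at every point of $\A^s$; hence $(\partial\bfs{R}/\partial\bfs{X})(\bfs{x})$ fails to have full rank only if $\mathrm{rank}\,M(\bfs{x}) < s$.

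Next I would extract from (\ref{eq:factorizacion del jacobiano de los simetricos elementales}) the factorization $M = \tilde B_r^s\cdot A_r^s$, where $\tilde B_r^s$ is the $s\times s$ leading principal block of $B_r$, which is lower triangular with $\pm 1$ on the diagonal and hence invertible, and $A_r^s = (X_j^{i-1})_{1\le i\le s,\,1\le j\le r}$ consists of the first $s$ rows of $A_r$. (The last $r-s$ columns of the first $s$ rows of $B_r$ vanish by lower triangularity, which is why only $A_r^s$ enters.) Therefore $\mathrm{rank}\,M(\bfs{x}) = \mathrm{rank}\,A_r^s(\bfs{x})$, and it suffices to analyze when the $s\times r$ Vandermonde-like matrix $A_r^s(\bfs{x})$ drops rank.

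Each $s\times s$ minor of $A_r^s$ indexed by columns $j_1<\cdots<j_s$ equals the Vandermonde determinant $\prod_{1\le l<l'\le s}(x_{j_{l'}}-x_{j_l})$, so all such minors vanish simultaneously if and only if every choice of $s$ coordinates of $\bfs{x}$ contains a coincidence, equivalently if and only if $\bfs{x}$ has at most $s-1$ distinct coordinate values. The locus of such $\bfs{x}$ is a finite union of affine subspaces of $\A^r$, each parametrized by a map $\sigma\colon\{1,\ldots,r\}\to\{1,\ldots,s-1\}$ and by the $s-1$ values $c_1,\ldots,c_{s-1}$ assigned by $\sigma$; each component has dimension at most $s-1$. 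This establishes the first assertion of the theorem, and the statement about $\Sigma_r$ follows immediately since $\Sigma_r$ is contained in the locus where the Jacobian fails to have full rank.

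There is no substantive obstacle: once the chain rule is applied and the paper's own factorization of the Jacobian of the elementary symmetric polynomials is restricted to its first $s$ rows, everything reduces to the standard Vandermonde minor identity. The only point that needs a small verification is that the $\{1,\ldots,s\}\times\{s+1,\ldots,r\}$ block of $B_r$ vanishes, which is automatic from lower triangularity.
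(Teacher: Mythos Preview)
Your argument is correct and follows essentially the same route as the paper: chain rule to factor $(\partial\bfs{R}/\partial\bfs{X})$ through the Jacobian of $(\Pi_1,\ldots,\Pi_s)$, hypothesis (H2) to reduce to a rank drop of the latter, the factorization (\ref{eq:factorizacion del jacobiano de los simetricos elementales}) restricted to the first $s$ rows, and the Vandermonde minor identity to conclude that $\bfs{x}$ has at most $s-1$ distinct coordinates. The only cosmetic difference is that the paper phrases the (H2) step via a nonzero left kernel vector rather than your direct rank inequality, but the content is identical.
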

\begin{proof}
By the chain rule we deduce that the partial derivatives of $R_j$
satisfy the following equality for $1\le k \le r$:
$$
\dfrac{\partial R_j}{\partial X_k}  = \bigg(\dfrac{\partial
S_j}{\partial Y_1} \circ \Pi \bigg)
\cdot\dfrac{\partial\Pi_1}{\partial X_k} + \cdots +
\bigg(\dfrac{\partial S_j}{\partial Y_s}\circ \Pi \bigg) \cdot
\dfrac{\partial\Pi_s}{\partial X_k}
$$
Therefore we obtain
$$
\left(\frac{\partial \bfs{R}}{\partial
\bfs{X}}\right)=\left(\frac{\partial \bfs{S}}{\partial
\bfs{Y}}\circ\Pi\right)\cdot \left(\frac{\partial \Pi}{\partial
\bfs{X}}\right).
$$

Fix an arbitrary point $\bfs{x}$ for which $(\partial
\bfs{R}/\partial \bfs{X})(\bfs{x})$ has not full rank. Let
$\bfs{v}\in\A^{r-d+s}$ a nonzero vector in the left kernel of
$(\partial \bfs{R}/\partial \bfs{X})(\bfs{x})$. Then
$$\mathbf{0}=
\bfs{v}\cdot \left(\frac{\partial \bfs{R}}{\partial
\bfs{X}}\right)(\bfs{x})=\bfs{v}\cdot\left(\frac{\partial
\bfs{S}}{\partial \bfs{Y}}\right)\big(\Pi(\bfs{x})\big)\cdot
A(\bfs{x}),$$
where $A(\bfs{x})$ is the matrix  defined in (\ref{eq: Jacobian
Pi}). Since by (H2) the Jacobian matrix $(\partial \bfs{S}/\partial
\bfs{Y})\big(\Pi(\bfs{x})\big)$ has full rank,
$\bfs{w}:=\bfs{v}\cdot\left({\partial \bfs{S}}/{\partial
\bfs{Y}}\right)\big(\Pi(\bfs{x})\big)\in\A^s$ is nonzero and
$$\bfs{w}\cdot A(\bfs{x})= \mathbf{0}.$$
Hence, all the maximal minors of $A(\bfs{x})$ must be zero.

The matrix $A(\bfs{x})$ is the $(s\times r)$--submatrix of
$(\partial{\Pi_i}/\partial{X_j})_{1\leq i,j\leq r}(\bfs{x})$
consisting of the first $s$ rows of the latter. Therefore, from
(\ref{eq:factorizacion del jacobiano de los simetricos elementales})
we conclude that
$$
A(\bfs{x})=B_{s,r}(\bfs{x})\cdot A_r(\bfs{x}),
$$
where $B_{s,r}(\bfs{x})$ is the $(s\times r)$--submatrix of
$B_r(\bfs{x})$ consisting of the first $s$ rows of $B_r(\bfs{x})$.
Since the last $r-s$ columns of $B_{s,r}(\bfs{x})$ are zero, we may
rewrite this identity in the following way:
\begin{equation}\label{eq:factorization submatrix Jacobian elem symm}
A(\bfs{x})=B_s(\bfs{x})\cdot \left(
         \begin{array}{cccc}
           1 & 1 & \dots & 1
         \\
           x_1 & x_2 & \dots &  x_r
          \\  \vdots & \vdots & & \vdots
          \\
           x_1^{s-1} & x_2^{s-1} & \dots & x_r^{s-1}
         \end{array}
       \right),
\end{equation}
where $B_s(\bfs{x})$ is the $(s \times s)$--submatrix of
$B_r(\bfs{x})$ consisting on the first $s$ rows and the first $s$
columns of $B_r(\bfs{x})$.

Fix $1\leq l_1<\cdots<l_s\leq r$, set $I:=(l_1,\ldots,l_s)$ and
consider the $(s\times s)$--submatrix $M_I(\bfs{x})$ of $A(\bfs{x})$
consisting of the columns $l_1,\ldots,l_s$ of $A(\bfs{x})$, namely
$M_I(\bfs{x}):=({\partial \Pi_i}/\partial X_{l_{j}})_{1\leq i,j\leq
s}(\bfs{x})$.

From (\ref{eq:factorizacion del jacobiano de los simetricos
elementales}) and (\ref{eq:factorization submatrix Jacobian elem
symm}) we easily see that $M_I(\bfs{x})=B_s(\bfs{x})\cdot
A_{s,I}(\bfs{x})$, where $A_{s,I}(\bfs{x})$ is the Vandermonde
matrix $A_{s,I}(\bfs{x}):=(x_{l_{j}}^{i-1})_{1\leq i,j\leq s}$.
Therefore, we obtain
\begin{equation}\label{eq:determinante de vandermonde}
\det\big(M_I(\bfs{x})\big) = (-1)^{\frac{(s-1)s}{2}}\det
A_{s,I}(\bfs{x}) = (-1)^{\frac{(s-1)s}{2}}\!\!\prod_{1\le m<n\le
s}\!\!(x_{l_{m}}-x_{l_{n}})=0.
\end{equation}

Since (\ref{eq:determinante de vandermonde}) holds for every
$I:=(l_1,\ldots,l_s)$ as above, we conclude that $\bfs{x}$ has at
most $s-1$ pairwise--distinct coordinates. In particular, the set of
points $\bfs{x}$ for which $\mathrm{rank}(\partial \bfs{R}/\partial
\bfs{X})(\bfs{x})<r-d+s$ is contained in a finite union of linear
varieties of $\A^r$ of dimension $s-1$, and thus is an affine
variety of dimension at most $s-1$.

Now let $\bfs{x}$ be an arbitrary point $\Sigma_r$. By Lemma
\ref{lemma: Vr is complete inters} we have $\dim
\mathcal{T}_{\bfs{x}}V_r>d-s$. This implies that
$\mathrm{rank}\left({\partial \bfs{R}}/{\partial
\bfs{X}}\right)(\bfs{x})<r-d+s$, for otherwise we would have $\dim
\mathcal{T}_{\bfs{x}}V_r\le d-s$, contradicting thus the fact that
$\bfs{x}$ is a singular point of $V_r$. This finishes the proof of
the theorem.
\end{proof}
From Lemma \ref{lemma: Vr is complete inters} and Theorem
\ref{theorem: dimension singular locus} we obtain further algebraic
and geometric consequences concerning the polynomials $R_j$ and the
variety $V_r$. By Theorem \ref{theorem: dimension singular locus} we
have that the set of points $\bfs{x}\in\A^r$ for which the Jacobian
matrix $(\partial \bfs{R}/\partial \bfs{X})(\bfs{x})$ has not full
rank has dimension at most $s-1$. Since $R_{d-s}\klk R_{r-1}$ form a
regular sequence and $s-1<d-s$ holds, from \cite[Theorem
18.15]{Eisenbud95} we conclude that $R_{d-s}\klk R_{r-1}$ define a
radical ideal of $\fq[X_1\klk X_r]$. Finally, by the B\'ezout
inequality (\ref{equation:Bezout}) we have $\deg V_r\le
\prod_{j=d-s}^{r-1}\deg R_j$. In other words, we have the following
statement.
\begin{corollary}\label{coro: radicality and degree Vr}
The polynomials $R_{d-s}\klk R_{r-1}$ define a radical ideal and the
variety $V_r$ has degree $\deg V_r\le\prod_{j=d-s}^{r-1}\deg R_j$.
\end{corollary}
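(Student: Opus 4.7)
The plan is to derive both assertions as routine consequences of the two preceding results: Lemma~\ref{lemma: Vr is complete inters}, which gives that $R_{d-s},\ldots,R_{r-1}$ form a regular sequence in $\fq[X_1,\ldots,X_r]$ whose zero set $V_r$ is an equidimensional set-theoretic complete intersection of dimension $d-s$, and Theorem~\ref{theorem: dimension singular locus}, which bounds the dimension of the locus where the Jacobian $(\partial\bfs{R}/\partial\bfs{X})$ fails to have full rank $r-d+s$ by $s-1$. The hypothesis $2(s+1)\le d$ rearranges to $s-1<d-s$, so this ``bad locus'' has strictly smaller dimension than $V_r$ itself; in particular the singular locus $\Sigma_r\subseteq V_r$ has strictly smaller dimension than $V_r$.

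For the radicality claim, the key observation is that a complete intersection cut out by a regular sequence is Cohen--Macaulay, and a Cohen--Macaulay scheme is reduced as soon as it is generically reduced, i.e.\ smooth at a generic point of each irreducible component (Serre's $R_0$ condition). The inequality $\dim \Sigma_r \le s-1 < d-s=\dim V_r$ delivers exactly this: every irreducible component of $V_r$ contains a smooth point, so $V_r$ is generically reduced, and combined with the Cohen--Macaulay property (which holds because $R_{d-s},\ldots,R_{r-1}$ is a regular sequence) it is reduced. This is the content of \cite[Theorem~18.15]{Eisenbud95}, whose hypotheses are met by the dimension count above. Hence the ideal $(R_{d-s},\ldots,R_{r-1})$ equals its own radical.

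For the degree bound, I would apply the B\'ezout inequality~(\ref{equation:Bezout}) iteratively: starting from $V(R_{d-s})$, which is a hypersurface of degree $\deg R_{d-s}$, each successive intersection with the hypersurface $V(R_{j})$ for $j=d-s+1,\ldots,r-1$ multiplies the degree by at most $\deg R_j$. Telescoping gives $\deg V_r\le\prod_{j=d-s}^{r-1}\deg R_j$. No genuine obstacle arises here; the main thing to check, and the only place where any care is needed, is the dimension inequality $s-1<d-s$ that activates the Serre criterion for radicality, and this is immediate from the standing hypothesis $2(s+1)\le d$.
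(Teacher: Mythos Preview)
Your proposal is correct and follows essentially the same route as the paper: both invoke the regular-sequence property from Lemma~\ref{lemma: Vr is complete inters}, the dimension bound $\dim\Sigma_r\le s-1<d-s$ from Theorem~\ref{theorem: dimension singular locus} together with the hypothesis $2(s+1)\le d$, and then appeal to \cite[Theorem~18.15]{Eisenbud95} for radicality and to the B\'ezout inequality~(\ref{equation:Bezout}) for the degree bound. Your added remarks on Cohen--Macaulayness and Serre's $R_0$ condition simply unpack what that citation is doing.
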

%
%
\subsection{The geometry of $V_r^{\bfs{a}}$}

Now we consider the affine $\fq$--variety $V_r^{\bfs{a}}\subset\A^r$
defined by the polynomials $R_{d-s}^{\bfs{a}}\klk
R_{r-1}^{\bfs{a}}\in \fq[X_1,\ldots X_r]$ associated to
$\boldsymbol{a}:=(a_{d-1}\klk a_{d-s})\in\fq^s$ and the polynomial
$f_{\bfs{a}}:=T^d+a_{d-1}T^{d-1}\plp a_{d-s}T^{d-s}$. According to
Proposition \ref{prop: formula for Rj}, we may express each
$R_j^{\bfs{a}}$ in the form
$R_j^{\bfs{a}}=S_j^{\bfs{a}}(\Pi_1,\ldots,\Pi_{d-j})$, where
$S_j^{\bfs{a}}\in \fq[Y_1,\ldots,Y_{d-j}]$ is a monic polynomial in
$Y_{d-j}$, up to a nonzero constant, of degree $1$ in $Y_{d-j}$. In
particular, by a recursive argument it is easy to see that
$$\cfq[Y_1\klk Y_s]/(S_{d-s}^{\bfs{a}}\klk S_j^{\bfs{a}})\simeq\cfq[Y_1\klk Y_{d-j-1}]$$
for $d-s\le j\le r-1$. We conclude that $S_{d-s}^{\bfs{a}}\klk
S_{r-1}^{\bfs{a}}$ form a regular sequence of $\fq[Y_1\klk Y_s]$,
namely they satisfy (H1). Furthermore, we observe that
$$
\left(\frac{\partial \bfs{S}^{\bfs{a}}}{\partial
\bfs{Y}}\right)(\bfs{y})=
\left(\begin{array}{cccccccc}\dfrac{\partial
S_{d-s}^{\bfs{a}}}{\partial Y_1}(\bfs{y})
  &\cdots& \dfrac{\partial S_{d-s}^{\bfs{a}}}{\partial
  Y_{d-r}}(\bfs{y})&&\cdots& &\!\!\!\!\!\!\!\!c_{d-s}
  \\[2ex]
\dfrac{\partial S_{d-s+1}^{\bfs{a}}}{\partial Y_1}(\bfs{y})
  &\cdots& \dfrac{\partial S_{d-s+1}^{\bfs{a}}}{\partial
  Y_{d-r}}(\bfs{y})&&\cdots&c_{d-s+1}
  \\\vdots&&\vdots&&{}_{\displaystyle.}\!\cdot\!{}^{\displaystyle \cdot}&\\
  \dfrac{\partial S_{r-1}^{\bfs{a}}}{\partial
Y_1}(\bfs{y})
  &\cdots& \dfrac{\partial S_{r-1}^{\bfs{a}}}{\partial
  Y_{d-r}}(\bfs{y})&c_{r-1}

  \end{array}\right)
$$
holds for every $\bfs{y}\in \mathbb{A}^s$, where $c_{d-s}\klk
c_{r-1}$ are certain nonzero elements of $\fq$. As a consequence, we
have that  $(\partial \bfs{S}^{\bfs{a}}/\partial \bfs{Y})(\bfs{y})$
has full rank for every $\bfs{y}\in\A^s$, that is,
$S_{d-s}^{\bfs{a}}\klk S_{r-1}^{\bfs{a}}$ satisfy (H2). Then the
results of Section \ref{subsec: singular locus symmetric complete
inters} can be applied to $V_r^{\bfs{a}}$. In particular, we have
the following immediate consequence of Lemma \ref{lemma: Vr is
complete inters}, Theorem \ref{theorem: dimension singular locus}
and Corollary \ref{coro: radicality and degree Vr}.
\begin{corollary}\label{coro:dimension singular locus Vf}
Let $V_r^{\bfs{a}}\subset \A^r$ be the $\fq$--variety defined by
$R_{d-s}^{\bfs{a}}\klk R_{r-1}^{\bfs{a}}$. Then $V_r^{\bfs{a}}$ is
an ideal--theoretic complete intersection of dimension $d-s$, degree
at most $s!/(d-r)!$ and singular locus $\Sigma_r^{\bfs{a}}$ of
dimension at most $s-1$.
\end{corollary}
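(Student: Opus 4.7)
The plan is to verify that the polynomials $S_{d-s}^{\bfs{a}}\klk S_{r-1}^{\bfs{a}}\in\fq[Y_1\klk Y_s]$ fulfill hypotheses (H1) and (H2) of Section \ref{subsec: singular locus symmetric complete inters}, and then transfer the conclusions of Lemma \ref{lemma: Vr is complete inters}, Theorem \ref{theorem: dimension singular locus} and Corollary \ref{coro: radicality and degree Vr} to $V_r^{\bfs{a}}$ via the identification $R_j^{\bfs{a}}=S_j^{\bfs{a}}(\Pi_1\klk\Pi_{d-j})$ from Proposition \ref{prop: formula for Rj}.

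For (H1), I will exploit the fact from Proposition \ref{prop: formula for Rj} that each $S_j^{\bfs{a}}$ is (up to a nonzero constant of $\fq$) a monic polynomial of degree $1$ in $Y_{d-j}$ with coefficients in $\fq[Y_1\klk Y_{d-j-1}]$. A descending recursion on $j$ from $r-1$ down to $d-s$ then yields an isomorphism $\cfq[Y_1\klk Y_s]/(S_{d-s}^{\bfs{a}}\klk S_j^{\bfs{a}})\simeq\cfq[Y_1\klk Y_{d-j-1}]$, which is an integral domain; this gives the regular-sequence property at once.

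For (H2), the same structural description of $S_j^{\bfs{a}}$ shows that $S_j^{\bfs{a}}$ does not involve $Y_{d-j+1}\klk Y_s$ and that $\partial S_j^{\bfs{a}}/\partial Y_{d-j}$ is a nonzero constant $c_j\in\fq$. Reading off the Jacobian matrix $(\partial\bfs{S}^{\bfs{a}}/\partial\bfs{Y})$ as displayed just above the corollary, one sees a staircase of nonzero constants on an anti-diagonal together with zeros beyond it, so the rank at every point of $\A^s$ is the maximal value $r-d+s$. I expect this observation to be the main (and only mildly technical) step; it is essentially already recorded in the excerpt.

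Once (H1) and (H2) are in place, Lemma \ref{lemma: Vr is complete inters} gives that $V_r^{\bfs{a}}$ is a set-theoretic complete intersection of dimension $d-s$; Theorem \ref{theorem: dimension singular locus} bounds $\dim\Sigma_r^{\bfs{a}}\le s-1$; and Corollary \ref{coro: radicality and degree Vr} gives both the radicality of $(R_{d-s}^{\bfs{a}}\klk R_{r-1}^{\bfs{a}})$ (upgrading "set-theoretic" to "ideal-theoretic") and the inequality $\deg V_r^{\bfs{a}}\le\prod_{j=d-s}^{r-1}\deg R_j^{\bfs{a}}$. Combining the latter with the identity $\deg R_j^{\bfs{a}}=d-j$ from Proposition \ref{prop: formula for Rj} gives $\prod_{j=d-s}^{r-1}(d-j)=s(s-1)\cdots(d-r+1)=s!/(d-r)!$, which is the asserted degree bound, completing the proof.
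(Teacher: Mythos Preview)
Your proposal is correct and follows essentially the same route as the paper: the paper verifies (H1) via the recursive isomorphism $\cfq[Y_1\klk Y_s]/(S_{d-s}^{\bfs{a}}\klk S_j^{\bfs{a}})\simeq\cfq[Y_1\klk Y_{d-j-1}]$ and (H2) via the staircase form of $(\partial\bfs{S}^{\bfs{a}}/\partial\bfs{Y})$, and then invokes Lemma \ref{lemma: Vr is complete inters}, Theorem \ref{theorem: dimension singular locus} and Corollary \ref{coro: radicality and degree Vr} exactly as you do. The only minor slip is that your ``descending recursion on $j$'' should be ascending (from $j=d-s$ upward), since one first eliminates $Y_s$ using $S_{d-s}^{\bfs{a}}$, then $Y_{s-1}$ using $S_{d-s+1}^{\bfs{a}}$, and so on; the substance of the argument is unaffected.
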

%
%
\subsubsection{The projective closure of $V_r^{\bfs{a}}$}
In order to obtain estimates on the number of $q$--rational points
of $V_r^{\bfs{a}}$ we also need information concerning the behavior
of $V_r^{\bfs{a}}$ ``at infinity''. For this purpose, we consider
the projective closure
$\mathrm{pcl}(V_r^{\bfs{a}})\subset\mathbb{P}^r$ of $V_r^{\bfs{a}}$,
whose definition we now recall. Consider the embedding of $\A^r$
into the projective space $\Pp^r$ which assigns to any
$\bfs{x}:=(x_1,\ldots, x_r)\in\A^r$ the point
$(1:x_1:\dots:x_r)\in\Pp^r$. The closure
$\mathrm{pcl}(V_r^{\bfs{a}})\subset\Pp^r$ of the image of
$V_r^{\bfs{a}}$ under this embedding in the Zariski topology of
$\Pp^r$ is called the {\sf projective closure} of $V_r^{\bfs{a}}$.
The points of $\mathrm{pcl}(V_r^{\bfs{a}})$ lying in the hyperplane
$\{X_0=0\}$ are called the points of $\mathrm{pcl}(V_r^{\bfs{a}})$
at infinity.

It is well--known that $\mathrm{pcl} (V_r^{\bfs{a}})$ is the
$\fq$--variety of $\mathbb{P}^r$ defined by the homogenization
$F^h\in\fq[X_0,\ldots,X_r]$ of each polynomial $F$ belonging to the
ideal $(R_{d-s}^{\bfs{a}}\klk
R_{r-1}^{\bfs{a}})\subset\fq[X_1,\ldots,X_r]$ (see, e.g., \cite[\S
I.5, Exercise 6]{Kunz85}). Denote by $(R_{d-s}^{\bfs{a}}\klk
R_{r-1}^{\bfs{a}})^h$ the ideal generated by all the polynomials
$F^h$ with $F\in (R_{d-s}^{\bfs{a}}\klk R_{r-1}^{\bfs{a}})$. Since
$(R_{d-s}^{\bfs{a}}\klk R_{r-1}^{\bfs{a}})$ is radical it turns out
that $(R_{d-s}^{\bfs{a}}\klk R_{r-1}^{\bfs{a}})^h$ is also a radical
ideal (see, e.g., \cite[\S I.5, Exercise 6]{Kunz85}). Furthermore,
$\mathrm{pcl} (V_r^{\bfs{a}})$ is an equidimensional variety of
dimension $d-s$ (see, e.g., \cite[Propositions I.5.17 and
II.4.1]{Kunz85}) and degree at most $s!/(d-r)!$ (see, e.g.,
\cite[Proposition 1.11]{CaGaHe91}).

Now we discuss the behavior of $\mathrm{pcl} (V_r^{\bfs{a}})$ at
infinity. By Proposition \ref{prop: formula for Rj}, for $d-s\le
j\le r-1$ we have
$$R_j^{\bfs{a}}=a_j+\sum_{i=r}^da_iH_{j,i},$$
where the polynomials $H_{j,i}$ are homogeneous of degree $i-j$.
Hence, the homogenization of each $R_j^{\bfs{a}}$ is the following
polynomial of $\fq[X_0,\ldots, X_r]$:
\begin{equation} \label{eq:homogenizacion de H_d}
R_j^{\bfs{a},h}=a_jX_0^{d-j}+\sum_{i=r}^da_iH_{j,i}X_0^{d-i}.
\end{equation}
In particular, it follows that $R_j^{\bfs{a},h}(0,X_1\klk
X_r)=H_{j,d}$ ($d-s\le j\le r-1$) are the polynomials associated to
the polynomial $T^d\in\fq[T]$ in the sense of Lemma \ref{lemma:
system defining Vr}.
\begin{lemma}\label{lemma: dim singular locus Vfr at infinity}
$\mathrm{pcl}(V_r^{\bfs{a}})$ has singular locus at infinity of
dimension at most $s-2$.
\end{lemma}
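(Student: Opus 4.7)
The plan is to describe $\mathrm{pcl}(V_r^{\bfs{a}})\cap\{X_0=0\}$ explicitly in terms of the leading forms of $R_{d-s}^{\bfs{a}}\klk R_{r-1}^{\bfs{a}}$, and then bound the dimension of its singular locus by applying Theorem~\ref{theorem: dimension singular locus} to these leading forms, losing one dimension in the passage from affine cone to projective variety.

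First I would observe from Proposition~\ref{prop: formula for Rj} that the top-degree homogeneous part of $R_j^{\bfs{a}}$ is $H_{j,d}$ (since $a_d=1$), and from Lemma~\ref{lemma: formula Hij} that $H_{j,d}=\widetilde{S}_j(\Pi_1\klk\Pi_{d-j})$ where each $\widetilde{S}_j\in\fq[Y_1\klk Y_{d-j}]$ is, up to a nonzero constant of $\fq$, a monic element of $\fq[Y_1\klk Y_{d-j-1}][Y_{d-j}]$. The very same argument used at the opening of the subsection ``The geometry of $V_r^{\bfs{a}}$'' then applies verbatim to $\widetilde{S}_{d-s}\klk \widetilde{S}_{r-1}$, so these satisfy hypotheses (H1) and (H2). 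In particular, $H_{d-s,d}\klk H_{r-1,d}$ form a regular sequence in $\fq[X_1\klk X_r]$.

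Next I would identify $\mathrm{pcl}(V_r^{\bfs{a}})$ with $V(R_{d-s}^{\bfs{a},h}\klk R_{r-1}^{\bfs{a},h})\subset\Pp^r$. The right-hand side is equidimensional of dimension $d-s$ because its leading forms form a regular sequence, and it contains $\mathrm{pcl}(V_r^{\bfs{a}})$, which is also equidimensional of dimension $d-s$. Any spurious extra component of the right-hand side would lie entirely in $\{X_0=0\}$ and have dimension $d-s$; but the trace of the right-hand side on $\{X_0=0\}$ is cut out by $H_{d-s,d}\klk H_{r-1,d}$ and hence has dimension $d-s-1<d-s$, which rules this out. Therefore $\mathrm{pcl}(V_r^{\bfs{a}})\cap\{X_0=0\}=V(H_{d-s,d}\klk H_{r-1,d})$ in $\{X_0=0\}\cong\Pp^{r-1}$.

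Finally I would exploit the block structure of the Jacobian of $R_{d-s}^{\bfs{a},h}\klk R_{r-1}^{\bfs{a},h}$. From (\ref{eq:homogenizacion de H_d}), at any point $(0:x_1:\cdots:x_r)$ the last $r$ columns of this Jacobian coincide with $(\partial H_{j,d}/\partial X_k)_{d-s\le j\le r-1,\,1\le k\le r}(x_1\klk x_r)$. Whenever this $(r-d+s)\times r$ submatrix has full rank, so does the full Jacobian, making the projective point nonsingular on $\mathrm{pcl}(V_r^{\bfs{a}})$. Hence the singular locus at infinity is contained in the projectivization of the affine set
\[
\Sigma^\infty:=\{\bfs{x}\in\A^r:(\partial\bfs{H}/\partial\bfs{X})(\bfs{x})\text{ does not have full rank}\},
\]
where $\bfs{H}:=(H_{d-s,d}\klk H_{r-1,d})$. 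Applying Theorem~\ref{theorem: dimension singular locus} to $H_{d-s,d}\klk H_{r-1,d}$ (for which (H1), (H2) were verified above) gives $\dim\Sigma^\infty\le s-1$. Since the $H_{j,d}$ are homogeneous in $X_1\klk X_r$, $\Sigma^\infty$ is invariant under scaling, so its projectivization in $\Pp^{r-1}$ has dimension at most $s-2$, as claimed. The most delicate step will be justifying the ideal-theoretic identity $(R_{d-s}^{\bfs{a}}\klk R_{r-1}^{\bfs{a}})^h=(R_{d-s}^{\bfs{a},h}\klk R_{r-1}^{\bfs{a},h})$ so that the variety at infinity is exactly $V(H_{d-s,d}\klk H_{r-1,d})$; this is handled by the regular-sequence argument on leading forms indicated above.
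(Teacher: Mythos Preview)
Your argument is correct and follows essentially the same route as the paper: pass to the leading forms $H_{j,d}$, verify that they satisfy (H1)--(H2), apply Theorem~\ref{theorem: dimension singular locus} to bound the affine rank--deficiency locus by dimension $s-1$, and drop one dimension by homogeneity.

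The one point worth noting is that your second paragraph---establishing the set--theoretic equality $\mathrm{pcl}(V_r^{\bfs{a}})=V(R_{d-s}^{\bfs{a},h}\klk R_{r-1}^{\bfs{a},h})$---is not actually needed for this lemma. The paper uses only the trivial containment: since each $R_j^{\bfs{a},h}$ vanishes on $\mathrm{pcl}(V_r^{\bfs{a}})$, the tangent space $\mathcal{T}_{\bfs{x}}\mathrm{pcl}(V_r^{\bfs{a}})$ is contained in the kernel of $(\partial R_j^{\bfs{a},h}/\partial X_k)(\bfs{x})$, and hence full rank of the $(\partial H_{j,d}/\partial X_k)$--block already forces $\bfs{x}$ to be nonsingular. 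The full identification you prove is precisely what the paper establishes later in Theorem~\ref{theorem: proj closure of Vf is abs irred}; proving it here is harmless but redundant, and the ``most delicate step'' you flag at the end is in fact avoidable.
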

\begin{proof}
Let $\Sigma^{\bfs{a}}_{r,\infty}\subset\mathbb{P}^r$ denote the
singular locus of $\mathrm{pcl}(V_r^{\bfs{a}})$ at infinity, namely
the set of singular points of $\mathrm{pcl}(V_r^{\bfs{a}})$ lying in
the hyperplane $\{X_0=0\}$, and let $\bfs{x}:=(0:x_1:\dots: x_r)$ be
an arbitrary point of $\Sigma_{r,\infty}^{\bfs{a}}$. Since the
polynomials $R_j^{{\bfs{a}},h}$ vanish identically in
$\mathrm{pcl}(V_r^{\bfs{a}})$, we have
$R_j^{{\bfs{a}},h}(\bfs{x})=H_{j,d}(x_1\klk x_r)=0$ for $d-s\le j\le
r-1$. Let $(\partial H_{d}/\partial \bfs{X})$ be the Jacobian matrix
of $\{H_{j,d}:d-s\le j\le r-1\}$ with respect to $X_1\klk X_r$. We
have
\begin{equation}\label{eq: rank singular points at infinity}
\mathrm{rank}\left(\frac{\partial H_d}{\partial
\bfs{X}}\right)(\bfs{x})<r-d+s,
\end{equation}
for if not, we would have that $\dim
\mathcal{T}_{\bfs{x}}(\mathrm{pcl}(V_r^{\bfs{a}}))\le d-s$, which
implies that $\bfs{x}$ is a nonsingular point of
$\mathrm{pcl}(V_r^{\bfs{a}})$, contradicting thus the hypothesis on
$\bfs{x}$.

From Proposition \ref{prop: formula for Rj} it follows that the
polynomials $H_{j,d}$ ($d-s\le j\le r-1$) satisfy the hypotheses of
Theorem \ref{theorem: dimension singular locus}. Then Theorem
\ref{theorem: dimension singular locus} shows that the set of points
satisfying (\ref{eq: rank singular points at infinity}) is an affine
equidimensional cone of dimension at most $s-1$. We conclude that
the projective variety $\Sigma_{r,\infty}^{\bfs{a}}$ has dimension
at most $s-2$.
\end{proof}
\begin{theorem}\label{theorem: Vf at infinity}
$\mathrm{pcl}(V_r^{\bfs{a}})\cap\{X_0=0\}\subset\Pp^{r-1}$ is an
absolutely irreducible ideal--theoretic complete intersection of
dimension $d-s-1$, degree $s!/(d-r)!$, and singular locus of
dimension at most $s-2$.
\end{theorem}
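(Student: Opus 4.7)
The plan is to exploit the observation (already made just before Lemma \ref{lemma: dim singular locus Vfr at infinity}) that $V_\infty := \mathrm{pcl}(V_r^{\bfs{a}})\cap\{X_0=0\}$ is defined by the polynomials $H_{j,d}$ for $d-s\le j\le r-1$, and that these polynomials are precisely the $R_j$'s associated via the construction of Section \ref{subsec: singular locus symmetric complete inters} to the specialization $a_d=1$, $a_{d-1}=\cdots=a_0=0$ (i.e., to the polynomial $T^d$ instead of $f_{\bfs{a}}$). Proposition \ref{prop: formula for Rj} gives $H_{j,d}=T_j(\Pi_1\klk \Pi_{d-j})$ with $T_j\in\fq[Y_1\klk Y_{d-j}]$ monic (up to a constant) of degree $1$ in $Y_{d-j}$, and the same argument used for $S_{d-s}^{\bfs{a}}\klk S_{r-1}^{\bfs{a}}$ at the beginning of Section 3.3 shows that $T_{d-s}\klk T_{r-1}$ form a regular sequence and have Jacobian of full rank on all of $\A^s$; that is, hypotheses (H1) and (H2) are satisfied.

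Granted this, Lemma \ref{lemma: Vr is complete inters} implies that the affine cone cut out by $H_{d-s,d}\klk H_{r-1,d}$ in $\A^r$ is a set-theoretic complete intersection of dimension $d-s$, so its projectivization $V_\infty\subset\Pp^{r-1}$ is a complete intersection of dimension $d-s-1$. Theorem \ref{theorem: dimension singular locus} applied to the same specialization shows that the locus in $\A^r$ where the Jacobian of $(H_{d-s,d}\klk H_{r-1,d})$ drops rank has dimension at most $s-1$; since $2(s+1)\le d$ gives $s-1<d-s$, this locus has codimension at least two in the complete intersection, so \cite[Theorem 18.15]{Eisenbud95} yields that $(H_{d-s,d}\klk H_{r-1,d})$ is a radical ideal. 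Hence $V_\infty$ is an ideal-theoretic complete intersection, and the degree bound from the B\'ezout inequality \eqref{equation:Bezout} becomes an equality, giving $\deg V_\infty=\prod_{j=d-s}^{r-1}(d-j)=s!/(d-r)!$.

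For absolute irreducibility, the plan is the classical normal--plus--connected argument. Lemma \ref{lemma: dim singular locus Vfr at infinity} bounds the dimension of $\mathrm{Sing}(V_\infty)$ by $s-2$, and $2(s+1)\le d$ forces $s-2\le d-s-3=\dim V_\infty -2$, so $V_\infty$ is regular in codimension one. Because a complete intersection is Cohen--Macaulay, it satisfies Serre's $S_2$; combined with $R_1$ this gives that $V_\infty$ is normal. On the other hand, $\dim V_\infty=d-s-1\ge s+1\ge 2$, and a projective complete intersection of positive dimension in $\Pp^{r-1}$ is connected (by the standard Hartshorne--type connectedness theorem for complete intersections). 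A connected normal variety over the algebraically closed field $\cfq$ is irreducible, so $V_\infty$ is absolutely irreducible.

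The routine parts are the verification of (H1)--(H2) for the $T_j$'s (which just repeats a computation already done) and the application of the previously established Lemmas/Theorems to pass from affine to projective. The step where care is needed is the final absolute--irreducibility argument: one has to combine the Cohen--Macaulayness of a complete intersection, the codimension bound on the singular locus coming from $2(s+1)\le d$, and the connectedness theorem for projective complete intersections, and it is precisely here that the hypothesis $2(s+1)\le d$ is used in an essential way.
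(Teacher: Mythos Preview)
Your overall strategy matches the paper's: verify (H1)--(H2) for the family $\{H_{j,d}\}$, feed it through Lemma~\ref{lemma: Vr is complete inters}, Theorem~\ref{theorem: dimension singular locus} and Corollary~\ref{coro: radicality and degree Vr}, and then argue connectedness/normality to get absolute irreducibility. The radicality and degree parts are fine.

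There is, however, a genuine gap at the very first step. What is observed just before Lemma~\ref{lemma: dim singular locus Vfr at infinity} is only that $R_j^{\bfs{a},h}(0,X_1,\ldots,X_r)=H_{j,d}$. This yields the \emph{containment} $V_\infty:=\mathrm{pcl}(V_r^{\bfs{a}})\cap\{X_0=0\}\subset V(H_{j,d}:d-s\le j\le r-1)$, not the equality you assert. The projective closure is defined by the homogenization of the \emph{ideal} $(R_{d-s}^{\bfs{a}},\ldots,R_{r-1}^{\bfs{a}})$, which in general is strictly larger than the ideal generated by the homogenized generators; a priori $V_\infty$ could be a proper closed subvariety of $V(H_{j,d})$ (or even empty). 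The paper closes this gap after proving that $V(H_{j,d})$ is absolutely irreducible of dimension $d-s-1$: since $\mathrm{pcl}(V_r^{\bfs{a}})$ is equidimensional of dimension $d-s\ge 1$, every irreducible component of $V_\infty$ has dimension at least $d-s-1$, and the containment in an irreducible variety of that same dimension forces $V_\infty=V(H_{j,d})$. You need this argument (or an equivalent one) before you can transfer the properties you proved for $V(H_{j,d})$ to $V_\infty$.

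A smaller point: Lemma~\ref{lemma: dim singular locus Vfr at infinity} bounds $\dim\big(\mathrm{Sing}(\mathrm{pcl}(V_r^{\bfs{a}}))\cap\{X_0=0\}\big)$, which is not the same object as $\mathrm{Sing}(V_\infty)$. For the singular locus of $V_\infty$ itself you should invoke Theorem~\ref{theorem: dimension singular locus} (or Corollary~\ref{coro:dimension singular locus Vf}) applied directly to the $H_{j,d}$'s --- exactly as you already did for the radicality step --- rather than Lemma~\ref{lemma: dim singular locus Vfr at infinity}. With these two fixes your proof goes through and is essentially the paper's argument; your normal-plus-connected justification of irreducibility is a legitimate variant of the paper's appeal to Hartshorne connectedness.
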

\begin{proof}
From (\ref{eq:homogenizacion de H_d}) it is easy to see that the
polynomials $H_{j,d}$ vanish identically in
$\mathrm{pcl}(V_r^{\bfs{a}})\cap\{X_0=0\}$ for $d-s\le j\le r-1$.
Lemma \ref{lemma: formula Hij} shows that $\{H_{j,d}:d-s\le j\le
r-1\}$ satisfy the conditions (H1) and (H2). Then Corollary
\ref{coro:dimension singular locus Vf} shows that the variety of
$\A^r$ defined by $H_{j,d}$ ($d-s\le j\le r-1$) is an affine
equidimensional cone of dimension $d-s$, degree at most $s!/(d-r)!$
and singular locus of dimension at most $s-1$. It follows that the
projective variety of $\Pp^{r-1}$ defined by these polynomials is
equidimensional of dimension $d-s-1$, degree at most $s!/(d-r)!$ and
singular locus of dimension at most $s-2$.

Observe that $V(H_{j,d}:d-s\le j\le r-1)\subset\Pp^{r-1}$ is a
set--theoretic complete intersection, whose singular locus has
codimension at least $d-s-1-(s-2)\ge 3$. Therefore, the Hartshorne
connectedness theorem (see, e.g., \cite[Theorem 4.2]{Kunz85}) shows
that $V(H_{j,d}:d-s\le j\le r-1)$ is absolutely irreducible.

On the other hand, since $\mathrm{pcl}(V_r^{\bfs{a}})$ is
equidimensional of dimension $d-s$ we have that each irreducible
component of $\mathrm{pcl}(V_r^{\bfs{a}})\cap\{X_0=0\}$ has
dimension at least $d-s-1$. Furthermore,
$\mathrm{pcl}(V_r^{\bfs{a}})\cap\{X_0=0\}$ is contained in the
projective variety $V(H_{j,d}:d-s\le j\le r-1)$, which is absolutely
irreducible of dimension $d-s-1$. We conclude that
$\mathrm{pcl}(V_r^{\bfs{a}})\cap\{X_0=0\}$ is also absolutely
irreducible of dimension $d-s-1$, and hence
$$\mathrm{pcl}(V_r^{\bfs{a}})\cap\{X_0=0\}=V(H_{j,d}:d-s\le j\le
r-1).$$

Finally, by \cite[Theorem 18.15]{Eisenbud95} we deduce that the
polynomials $H_{j,d}$ $(d-s\le j\le r-1)$ define a radical ideal. As
a consequence, we conclude that
$\deg(\mathrm{pcl}\big(V_r^{\bfs{a}})\cap\{X_0=0\}\big)=
\prod_{j=d-s}^{r-1}\deg H_{j,d}=s!/(d-r)!$ (see, e.g., \cite[Theorem
18.3]{Harris92}). This finishes the proof of the theorem.
\end{proof}

We conclude this section with a statement that summarizes all the
facts we shall need concerning the projective closure
$\mathrm{pcl}(V_r^{\bfs{a}})$.
\begin{theorem}\label{theorem: proj closure of Vf is abs irred}
The projective variety $\mathrm{pcl}(V_r^{\bfs{a}})\subset\Pp^r$ is
an absolutely irreducible ideal--theoretic complete intersection of
dimension $d-s$, degree $s!/(d-r)!$ and singular locus of dimension
at most $s-1$.
\end{theorem}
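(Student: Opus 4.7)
The plan is to identify $\mathrm{pcl}(V_r^{\bfs{a}})$ set--theoretically with $V(R_{d-s}^{\bfs{a},h}\klk R_{r-1}^{\bfs{a},h})\subset\Pp^r$, where the homogenizations are given by \eqref{eq:homogenizacion de H_d}, leverage the absolutely irreducible hyperplane section at infinity from Theorem \ref{theorem: Vf at infinity} to pin down absolute irreducibility, and finally invoke \cite[Theorem 18.15]{Eisenbud95} to upgrade to an ideal--theoretic complete intersection. For the set--theoretic identification, observe that the affine chart $\{X_0\ne 0\}$ of $V(R_{d-s}^{\bfs{a},h}\klk R_{r-1}^{\bfs{a},h})$ equals $V_r^{\bfs{a}}$, of dimension $d-s$ by Corollary \ref{coro:dimension singular locus Vf}, while by \eqref{eq:homogenizacion de H_d} its hyperplane section at infinity coincides with $V(H_{j,d}:d-s\le j\le r-1)$, of dimension $d-s-1$ by Theorem \ref{theorem: Vf at infinity}. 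Therefore $V(R_{d-s}^{\bfs{a},h}\klk R_{r-1}^{\bfs{a},h})$ has dimension exactly $d-s$; as this matches the expected codimension $r-d+s$, it is a set--theoretic complete intersection, the homogenizations form a regular sequence in $\cfq[X_0\klk X_r]$, and the variety is equidimensional of dimension $d-s$. Since $\mathrm{pcl}(V_r^{\bfs{a}})$ is itself equidimensional of dimension $d-s$ and no component of it is contained in $\{X_0=0\}$ (being the closure of an affine variety), any extra component would live in the $(d-s-1)$--dimensional variety at infinity, a contradiction; the two varieties coincide.

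For absolute irreducibility I would argue by a degree comparison. Decompose $\mathrm{pcl}(V_r^{\bfs{a}}) = \mathcal{C}_1\cup\cdots\cup \mathcal{C}_k$ into absolutely irreducible components, each of dimension $d-s$ and none contained in $\{X_0=0\}$. The projective dimension theorem gives $\dim(\mathcal{C}_i\cap\{X_0=0\}) = d-s-1$ for each $i$, and since the whole hyperplane section $\mathrm{pcl}(V_r^{\bfs{a}})\cap\{X_0=0\}$ is absolutely irreducible of the same dimension by Theorem \ref{theorem: Vf at infinity}, each $\mathcal{C}_i\cap\{X_0=0\}$ must coincide with it. The B\'ezout inequality \eqref{equation:Bezout} then yields $\deg \mathcal{C}_i \ge \deg(\mathcal{C}_i\cap\{X_0=0\}) = s!/(d-r)!$, and summing over $i$ against the upper bound $\deg\mathrm{pcl}(V_r^{\bfs{a}})\le s!/(d-r)!$ from the paragraph preceding the theorem forces $k=1$.

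The singular locus bound $\dim\mathrm{Sing}(\mathrm{pcl}(V_r^{\bfs{a}}))\le s-1$ follows from combining Corollary \ref{coro:dimension singular locus Vf} (affine part, $\le s-1$) with Lemma \ref{lemma: dim singular locus Vfr at infinity} (infinity part, $\le s-2$). The resulting codimension in $\mathrm{pcl}(V_r^{\bfs{a}})$ is at least $(d-s)-(s-1) = d-2s+1\ge 3$ by the hypothesis $2(s+1)\le d$, so \cite[Theorem 18.15]{Eisenbud95} applied to the regular sequence $R_{d-s}^{\bfs{a},h}\klk R_{r-1}^{\bfs{a},h}$ obtained above shows that they generate a radical ideal, giving the ideal--theoretic complete intersection statement. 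The exact degree $s!/(d-r)! = \prod_{j=d-s}^{r-1}(d-j)$ then follows from the B\'ezout equality for ideal--theoretic complete intersections recalled in Section \ref{subsec: alg geom}. The main technical obstacle is the dimension--based passage from a set--theoretic complete intersection to the regular sequence property in the first step; the remaining ingredients are routine applications of already--developed machinery.
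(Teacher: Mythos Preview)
Your proof is correct, but you take a different route to absolute irreducibility than the paper does. The paper applies the Hartshorne connectedness theorem directly to $V(R_j^{\bfs{a},h}:d-s\le j\le r-1)$: once the set--theoretic complete intersection structure and the singular--locus bound $\dim\Sigma\le s-1$ are in place, the codimension $(d-s)-(s-1)\ge 3$ of the singular locus forces irreducibility, and the equality $\mathrm{pcl}(V_r^{\bfs{a}})=V(R_j^{\bfs{a},h})$ then follows from the inclusion together with irreducibility and equidimensionality.

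You instead reverse the order: you first establish the equality $\mathrm{pcl}(V_r^{\bfs{a}})=V(R_j^{\bfs{a},h})$ by a direct component argument (any extra component would lie in $\{X_0=0\}$, contradicting the dimension of the section at infinity), and then prove irreducibility by a degree comparison. Each absolutely irreducible component meets $\{X_0=0\}$ in the full irreducible section of degree $s!/(d-r)!$ from Theorem \ref{theorem: Vf at infinity}, so by the B\'ezout inequality \eqref{equation:Bezout} each component already has degree at least $s!/(d-r)!$; against the known upper bound on $\deg\mathrm{pcl}(V_r^{\bfs{a}})$ this forces a single component. This is a valid alternative that leans more heavily on Theorem \ref{theorem: Vf at infinity}; note, however, that Hartshorne connectedness is still invoked inside the proof of that theorem, so you are not avoiding it altogether. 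The remaining steps (singular--locus bound, Eisenbud's criterion for radicality, exact degree) match the paper.
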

\begin{proof}
We have already shown that $\mathrm{pcl}(V_r^{\bfs{a}})$ is an
equidimensional variety of dimension $d-s$ and degree at most
$s!/(d-r)!$. According to Corollary \ref{coro:dimension singular
locus Vf}, the singular locus of $\mathrm{pcl}(V_r^{\bfs{a}})$ lying
in the open set $\{X_0\not=0\}$ has dimension at most $s-1$, while
Lemma \ref{lemma: dim singular locus Vfr at infinity} shows that the
singular locus at infinity has dimension at most $s-2$. This shows
that the singular locus of $\mathrm{pcl}(V_r^{\bfs{a}})$ has
dimension at most $s-1$.

On the other hand, we observe that $\mathrm{pcl}(V_r^{\bfs{a}})$ is
contained in the projective variety $V(R_j^{{\bfs{a}},h}:d-s\le j\le
r-1)$. We have the inclusions
\begin{eqnarray*}
V(R_j^{{\bfs{a}},h}:d-s\le j\le r-1)\cap\{X_0\not=0\}&\!\!\!\!
\subset&
\!\!\!\!V(R_j^{\bfs{a}}:d-s\le j\le r-1)\\
V(R_j^{{\bfs{a}},h}:d-s\le j\le r-1)\cap\{X_0=0\}&\!\!\!\!\subset&
\!\!\!\!V(H_{d,j}:d-s\le j\le r-1).
\end{eqnarray*}
Both $\{R_j^{\bfs{a}}:d-s\le j\le r-1\}$ and $\{H_{j,d}:d-s\le j\le
r-1\}$ satisfy the conditions (H1) and (H2). Then Corollary
\ref{coro:dimension singular locus Vf} shows that
$V(R_j^{\bfs{a}}:d-s\le j\le r-1)\subset\A^r$ is equidimensional of
dimension $d-s$ and $V(H_{d,j}:d-s\le j\le r-1)\subset\Pp^{r-1}$ is
equidimensional of dimension $d-s-1$. We conclude that
$V(R_j^{{\bfs{a}},h}:d-s\le j\le r-1)$ has dimension at most $d-s$.
Taking into account that it is defined by $r-d+s$ polynomials, we
deduce that it is a set--theoretic complete intersection of
dimension $r-(r-d+s)=d-s$. Finally, since its singular locus has
dimension at most $s-1$ and $d-s-(s-1)\ge 3$, the Hartshorne
connectedness theorem (see, e.g., \cite[Theorem 4.2]{Kunz85}) proves
that $V(R_j^{{\bfs{a}},h}:d-s\le j\le r-1)$ is absolutely
irreducible

Summarizing, we have that $\mathrm{pcl}(V_r^{\bfs{a}})$ and
$V(R_j^{{\bfs{a}},h}:d-s\le j\le r-1)$ are projective
equidimensional varieties of dimension $d-s$ with
$\mathrm{pcl}(V_r^{\bfs{a}})\subset V(R_j^{{\bfs{a}},h}:d-s\le j\le
r-1)$ and $V(R_j^{{\bfs{a}},h}:d-s\le j\le r-1)$ absolutely
irreducible. Therefore, we deduce that
$$
\mathrm{pcl}(V_r^{\bfs{a}})=V(R_j^{{\bfs{a}},h}:d-s\le j\le r-1).
$$
Now we argue as in the proof of Theorem \ref{theorem: Vf at
infinity}. By \cite[Theorem 18.15]{Eisenbud95} it follows that the
polynomials $R_j^{\bfs{a},h}$ $(d-s\le j\le r-1)$ define a radical
ideal. This in turn implies that
$\deg\mathrm{pcl}\big(V_r^{\bfs{a}})=\prod_{j=d-s}^{r-1}\deg
R_j^{\bfs{a},h}=s!/(d-r)!$ (see, e.g., \cite[Theorem
18.3]{Harris92}) and finishes the proof of the theorem.
\end{proof}
%
%
\section{The number of $q$--rational points of $V_r^{\bfs{a}}$}
As before, let be given integers $d$ and $s$ with $d<q$, $1\le s\le
d-2$ and $2(s+1)\le d$. Let also be given $\bfs{a}:=(a_{d-1}\klk
a_{d-s})$ and set $f_{\bfs{a}}:=T^d+a_{d-1}T^{d-1}+\cdots+
a_{d-s}T^{d-s}\in\fq[T]$. As asserted before, our objective is to
determine the asymptotic behavior of the average value set
$\mathcal{V}(d,s,\bfs{a})$ of (\ref{eq: average value set}).

For this purpose, according to Theorem \ref{theorem: interpolation
problem - reduction to interp sets}, we have to determine, for
$d-s+1\le r\le d$, the number $\chi_r^{\bfs{a}}$ of subsets
$\mathcal{X}_r\subset\fq$ of $r$ elements such that there exists
$g\in\fq[T]$ of degree at most $d-s-1$ interpolating $-f_{\bfs{a}}$
at all the elements of $\mathcal{X}_r$. In Section \ref{subsec:
relating interpol to rat points} we associate to ${\bfs{a}}$ certain
polynomials $R_j^{\bfs{a}}\in\fq[X_1\klk X_r]$ $(d-s\le j\le r-1)$
with the property that the number of common $q$--rational zeros of
$R_{d-s}^{\bfs{a}}\klk R_{r-1}^{\bfs{a}}$ with pairwise distinct
coordinates equals $r!\chi_r^{\bfs{a}}$, namely
$$\chi_r^{\bfs{a}}=\frac{1}{r!}\left|\left\{\bfs{x}\in\fq^r:R_j^{\bfs{a}}(\bfs{x})=0\,
(d-s\le j\le r-1),x_k\not= x_l\, (1\le k<l\le r)\right\}\right|.$$

The results of Section \ref{section: geometry of Vr} are fundamental
for establishing the asymptotic behavior of $\chi_r^{\bfs{a}}$. Fix
$r$ with $d-s+1\le r\le d$, let $V_r^{\bfs{a}}\subset\A^r$ be the
affine variety defined by $R_{d-s}^{\bfs{a}}\klk
R_{r-1}^{\bfs{a}}\in\fq[X_1,\ldots X_r]$ and denote by
$\mathrm{pcl}(V_r^{\bfs{a}})\subset\Pp^r$ the projective closure of
$V_r^{\bfs{a}}$. According to Theorems \ref{theorem: Vf at infinity}
and \ref{theorem: proj closure of Vf is abs irred}, both
$\mathrm{pcl}(V_r^{\bfs{a}})\cap\{X_0=0\}\subset\Pp^{r-1}$ and
$\mathrm{pcl}(V_r^{\bfs{a}})\subset\Pp^r$ are projective, absolutely
irreducible, ideal--theoretic complete intersections defined over
$\fq$, of dimension $d-s-1$ and $d-s$ respectively, both of degree
$s!/(d-r)!$, having a singular locus of dimension at most $s-2$ and
$s-1$ respectively.
%
%
\subsection{Estimates on the number of $q$--rational points of
complete intersections}
In what follows, we shall use an estimate on the number of
$q$--rational points of a projective complete intersection defined
over $\fq$ due to \cite{CaMaPr13} (see \cite{GhLa02a}, \cite{GhLa02}
for further explicit estimates of this type). In \cite[Corollary
8.4]{CaMaPr13} the authors prove that, for an absolutely irreducible
ideal--theoretic complete intersection $V\subset\Pp^m$ of dimension
$n:=m-r$, degree $\delta\ge 2$, which is defined over $\fq$ by
polynomials of degree $d_1\ge\cdots\ge d_r\ge 2$, and having
singular locus of dimension at most $s\le n-3$, the number
$|V(\fq)|$ of $q$--rational points of $V$ satisfies the estimate
  \begin{equation}\label{eq: estimate rat points CML}
    \big||V(\fq)|-p_n\big|\leq 14 D^3\delta^2q^{n-1},
  \end{equation}
where $p_n:=q^n+q^{n-1}+\cdots+ q+1$ is the cardinality of
$\Pp^n(\fq)$ and $D:=\sum_{i=1}^r(d_i-1)$.

From (\ref{eq: estimate rat points CML}) we obtain the following
result.
\begin{theorem}\label{theorem: estimate chi(a,r)}
With notations and assumptions as above, for $d-s+1\le r\le d$ we
have
$$
\left|\chi_r^{\bfs{a}}-\frac{q^{d-s}}{r!}\right|\le
\frac{r(r-1)}{2r!}\,\delta_rq^{d-s-1} +\frac{14}{r!} D_r^3
\delta_r^2(q+1)q^{d-s-2},$$
where $D_r:= \sum_{j=d-r+1}^s(j-1)$ and
$\delta_r:=\prod_{j=d-r+1}^sj=s!/(d-r)!$.
\end{theorem}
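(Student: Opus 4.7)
The plan is to translate the counting of interpolating sets into a counting of $\fq$-rational points on $V_r^{\bfs{a}}$ with pairwise distinct coordinates, and then apply the estimate (\ref{eq: estimate rat points CML}) of \cite{CaMaPr13} to the projective closure $\mathrm{pcl}(V_r^{\bfs{a}})$ together with its part at infinity. By the discussion in Section \ref{subsec: relating interpol to rat points},
\[
r!\,\chi_r^{\bfs{a}}=|V_r^{\bfs{a}}(\fq)|-\bigl|V_r^{\bfs{a}}(\fq)\cap\Delta_r\bigr|,
\]
where $\Delta_r:=\bigcup_{1\le i<j\le r}\{X_i=X_j\}$. The bulk of the work is to prove that $|V_r^{\bfs{a}}(\fq)|=q^{d-s}+\mathcal{O}\bigl(D_r^3\delta_r^2(q+1)q^{d-s-2}\bigr)$, and that the ``diagonal'' correction $|V_r^{\bfs{a}}(\fq)\cap\Delta_r|$ is at most $\tfrac{r(r-1)}{2}\delta_r q^{d-s-1}$.

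First I would decompose
\[
|V_r^{\bfs{a}}(\fq)|=\bigl|\mathrm{pcl}(V_r^{\bfs{a}})(\fq)\bigr|-\bigl|\mathrm{pcl}(V_r^{\bfs{a}})(\fq)\cap\{X_0=0\}\bigr|.
\]
Theorem \ref{theorem: proj closure of Vf is abs irred} says $\mathrm{pcl}(V_r^{\bfs{a}})\subset\Pp^r$ is an absolutely irreducible ideal-theoretic complete intersection of dimension $n:=d-s$, degree $\delta_r$, defined by polynomials of degrees $d-j$ for $d-s\le j\le r-1$ (so the associated quantity $D$ of (\ref{eq: estimate rat points CML}) is exactly $\sum_{j=d-s}^{r-1}(d-j-1)=D_r$), and having singular locus of dimension at most $s-1$. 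The hypothesis $2(s+1)\le d$ gives $s-1\le n-3$, so (\ref{eq: estimate rat points CML}) applies and yields
\[
\bigl||\mathrm{pcl}(V_r^{\bfs{a}})(\fq)|-p_n\bigr|\le 14\,D_r^3\,\delta_r^2\,q^{n-1}.
\]
Theorem \ref{theorem: Vf at infinity} shows $\mathrm{pcl}(V_r^{\bfs{a}})\cap\{X_0=0\}$ is an absolutely irreducible ideal-theoretic complete intersection in $\Pp^{r-1}$ of dimension $n-1$, the same degree $\delta_r$, the same multidegrees (so the same $D_r$), and singular locus of dimension at most $s-2\le n-1-3$. Applying (\ref{eq: estimate rat points CML}) a second time gives an error of $14\,D_r^3\,\delta_r^2\,q^{n-2}$ against $p_{n-1}$. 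Subtracting and using $p_n-p_{n-1}=q^n$, the two errors combine into
\[
\bigl||V_r^{\bfs{a}}(\fq)|-q^{d-s}\bigr|\le 14\,D_r^3\,\delta_r^2\,(q+1)\,q^{d-s-2}.
\]

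For the diagonal contribution, I would use the union bound
\[
\bigl|V_r^{\bfs{a}}(\fq)\cap\Delta_r\bigr|\le\sum_{1\le i<j\le r}\bigl|\bigl(V_r^{\bfs{a}}\cap\{X_i=X_j\}\bigr)(\fq)\bigr|.
\]
Each $V_r^{\bfs{a}}\cap\{X_i=X_j\}$ is an affine $\fq$-variety of dimension at most $d-s-1$ and, by the B\'ezout inequality (\ref{equation:Bezout}) together with Corollary \ref{coro:dimension singular locus Vf}, degree at most $\delta_r$. The standard bound $|W(\fq)|\le(\deg W)\,q^{\dim W}$ for an affine $\fq$-variety $W$ therefore gives $\bigl|\bigl(V_r^{\bfs{a}}\cap\{X_i=X_j\}\bigr)(\fq)\bigr|\le \delta_r\,q^{d-s-1}$, and summing over the $\binom{r}{2}$ hyperplanes yields
\[
\bigl|V_r^{\bfs{a}}(\fq)\cap\Delta_r\bigr|\le\frac{r(r-1)}{2}\,\delta_r\,q^{d-s-1}.
\]
Putting the two bounds together in $r!\,\chi_r^{\bfs{a}}=|V_r^{\bfs{a}}(\fq)|-|V_r^{\bfs{a}}(\fq)\cap\Delta_r|$ and dividing by $r!$ yields the claimed inequality.

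The main obstacle is really the verification that the hypotheses of (\ref{eq: estimate rat points CML}) hold at infinity: one must confirm absolute irreducibility, the complete intersection property, and the singular locus codimension for $\mathrm{pcl}(V_r^{\bfs{a}})\cap\{X_0=0\}$, and identify its multidegrees so that $D$ and $\delta$ in (\ref{eq: estimate rat points CML}) coincide with $D_r$ and $\delta_r$. These are exactly the contents of Theorems \ref{theorem: Vf at infinity} and \ref{theorem: proj closure of Vf is abs irred}, so once they are invoked the remainder of the argument is a clean bookkeeping of the two applications of (\ref{eq: estimate rat points CML}) and the diagonal correction.
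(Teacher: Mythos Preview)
Your proposal is correct and follows essentially the same approach as the paper: decompose $|V_r^{\bfs{a}}(\fq)|$ via the projective closure and its hyperplane section at infinity, apply (\ref{eq: estimate rat points CML}) twice using Theorems \ref{theorem: Vf at infinity} and \ref{theorem: proj closure of Vf is abs irred}, and then bound the diagonal contribution by $\binom{r}{2}\delta_r q^{d-s-1}$. The only minor difference is in justifying that each $V_r^{\bfs{a}}\cap\{X_i=X_j\}$ has dimension $d-s-1$: you invoke irreducibility and B\'ezout, while the paper argues directly that the substituted polynomials $S_j^{\bfs{a}}(\Pi_1^*,\ldots,\Pi_s^*)$ (with $\Pi_k^*:=\Pi_k|_{X_i=X_j}$) form a regular sequence because $\Pi_1^*,\ldots,\Pi_s^*$ remain algebraically independent.
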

\begin{proof}
First we obtain an estimate on the number of $q$--rational points of
$V_r^{\bfs{a}}$. Let
$V_{r,\infty}^{\bfs{a}}:=\mathrm{pcl}(V_r^{\bfs{a}})\cap\{X_0=0\}$.
Combining Theorems \ref{theorem: Vf at infinity} and \ref{theorem:
proj closure of Vf is abs irred} with (\ref{eq: estimate rat points
CML}) we obtain
\begin{eqnarray*}
\big||\mathrm{pcl}(V_r^{\bfs{a}})(\fq)|-p_{d-s}\big|&\le& 14 D_r^3
\delta_r^2q^{d-s-1},  \\[0.5ex]
\big||V_{r,\infty}^{\bfs{a}}(\fq)|-p_{d-s-1}\big|&\le& 14 D_r^3
\delta_r^2q^{d-s-2}.
\end{eqnarray*}
As a consequence,
\begin{eqnarray}
    \big||V_r^{\bfs{a}}(\fq)|-q^{d-s}\big|&\!\!\!\! = &\!\!\!\!
    \big||\mathrm{pcl}(V_r^{\bfs{a}})(\fq)|-|V_{r,\infty}^{\bfs{a}}(\fq)|-
      p_{d-s}+p_{d-s-1}\big|\nonumber\\[1ex]
      &\!\!\!\! \le &\!\!\!\!
      \big||\mathrm{pcl}(V_r^{\bfs{a}})(\fq)|-p_{d-s}\big|+
      \big||V_{r,\infty}^{\bfs{a}}(\fq)|-p_{d-s-1}\big|
      \nonumber\\[1ex] &\!\!\!\! \le  &\!\!\!\! 14 D_r^3
\delta_r^2(q+1)q^{d-s-2}. \label{eq: estimate Vr todos}
  \end{eqnarray}

Next we obtain an upper bound on the number of $q$--rational points
of $V_r^{\bfs{a}}$ which are not useful for our purposes, namely
those with at least two distinct coordinates taking the same value.

Let $V_{r,=}^{\bfs{a}}(\fq)$ be the subset of $V_r^{\bfs{a}}(\fq)$
consisting of all such points, namely
$$V_{r,=}^{\bfs{a}}(\fq):=\bigcup_{1\le i<j\le r}
V_r^{\bfs{a}}(\fq)\cap\{X_i=X_j\},$$
and set $V_{r,\not=}^{\bfs{a}}(\fq):=V_r^{\bfs{a}}(\fq)\setminus
V_{r,=}^{\bfs{a}}(\fq)$. Let $\bfs{x}:=(x_1,\ldots, x_r)\in
V_{r,=}^{\bfs{a}}(\fq)$. Without loss of generality we may assume
that $x_{r-1} = x_r$ holds. Then $\bfs{x}$ is a $q$--rational point
of the affine variety $W_{r-1,r}\subset \{X_{r-1}=X_r\}$ defined by
the polynomials $S_{d-s}^{\bfs{a}}(\Pi_1^*\klk\Pi_s^*)\klk
S_{r-1}^{\bfs{a}}(\Pi_1^*\klk\Pi_s^*)\in\fq[X_1,\ldots X_{r-1}]$,
where $\Pi_i^*:=\Pi_{i}(X_1,\ldots, X_{r-1},X_{r-1})$ is the
polynomial of $\fq[X_1,\ldots, X_{r-1}]$ obtained by substituting
$X_{r-1}$ for $X_r$ in the $i$th elementary symmetric polynomial of
$\fq[X_1,\ldots, X_r]$. Taking into account that
$\Pi_1^*\klk\Pi_s^*$ are algebraically independent elements of
$\cfq[X_1\klk X_{r-1}]$, we conclude that
$S_{d-s}^{\bfs{a}}(\Pi_1^*\klk\Pi_s^*)\klk$ $
S_{r-1}^{\bfs{a}}(\Pi_1^*\klk\Pi_s^*)$ form a regular sequence of
$\fq[X_1,\ldots X_{r-1}]$. This implies that $W_{r-1,r}$ is of
dimension $d-s-1$, and hence, \cite[Proposition 12.1]{GhLa02} or
\cite[Proposition 3.1]{CaMa07} show that
$$|W_{r-1,r}(\fq)|\le \deg W_{r-1,r} q^{d-s-1}\le
\deg V_r^{\bfs{a}} q^{d-s-1}.$$
As a consequence, we obtain
$$|V_{r,=}^{\bfs{a}}(\fq)|\le\frac{r(r-1)}{2}\,\delta_r\,q^{d-s-1}.$$

Combining (\ref{eq: estimate Vr todos}) with this upper bound we
have
$$\big||V_{r,\not=}^{\bfs{a}}(\fq)|-q^{d-s}\big|\le
\frac{r(r-1)}{2}\,\delta_r\,q^{d-s-1} +14 D_r^3
\delta_r^2(q+1)q^{d-s-2}.$$
From this inequality we easily deduce the statement of the theorem.
\end{proof}

The estimate of Theorem \ref{theorem: estimate chi(a,r)} is the
essential step in order to determine the behavior of the average
value set $\mathcal{V}(d,s,\bfs{a})$. More precisely, we have the
following result.
\begin{corollary}\label{coro: average value sets}
With assumptions and notations as in Theorem \ref{theorem: estimate
chi(a,r)}, we have
\begin{equation}\label{eq: estimate V(d,s,a)}
\left|\mathcal{V}(d,s,\bfs{a})-\mu_d\, q-\frac{1}{2e}\right|\le
\frac{s^2+1}{(d-s-1)!}+\frac{21}{8}\frac{s^6(s!)^2}{d!}
\sum_{k=0}^{s-1}\binom{d}{k}\frac{1}{k!}+\frac{7}{q}.
\end{equation}
%
\end{corollary}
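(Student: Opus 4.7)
The plan is to substitute the estimate of Theorem~\ref{theorem: estimate chi(a,r)} into the formula~(\ref{eq: our formula for value sets}) provided by Theorem~\ref{theorem: interpolation problem - reduction to interp sets} and reassemble the main terms. That formula decomposes $\mathcal{V}(d,s,\bfs{a})$ as $S_1+S_2$, where
$$S_1:=\sum_{r=1}^{d-s}(-1)^{r-1}\binom{q}{r}q^{1-r}, \qquad S_2:=\frac{1}{q^{d-s-1}}\sum_{r=d-s+1}^d(-1)^{r-1}\chi_r^{\bfs{a}}.$$
The first piece is purely combinatorial, while the second encodes the arithmetic information coming from the varieties $V_r^{\bfs{a}}$; I would analyse each separately and then add.

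For $S_1$, I would write $\binom{q}{r}q^{1-r}=\tfrac{q}{r!}\prod_{i=1}^{r-1}(1-i/q)$ and expand the product in powers of $1/q$, obtaining
$$\binom{q}{r}q^{1-r}=\frac{q}{r!}-\frac{1}{2(r-2)!}+O(1/q)\qquad(r\ge 2),$$
with explicit remainder coefficients given by signed Stirling numbers of the first kind. Summing against $(-1)^{r-1}$ and reindexing $k=r-2$ yields
$$S_1=q\sum_{r=1}^{d-s}\frac{(-1)^{r-1}}{r!}+\frac{1}{2}\sum_{k=0}^{d-s-2}\frac{(-1)^{k}}{k!}+O(1/q),$$
and the alternating-series tail bound $\sum_{k=0}^{d-s-2}(-1)^k/k!=e^{-1}+\vartheta/(d-s-1)!$ with $|\vartheta|\le 1$ produces the constant $1/(2e)$ plus an error of order $1/(d-s-1)!$.

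For $S_2$, I would apply Theorem~\ref{theorem: estimate chi(a,r)} termwise and take the triangle inequality to obtain $S_2=q\sum_{r=d-s+1}^d(-1)^{r-1}/r!+E_1+E_2$, where
$$|E_1|\le\sum_{r=d-s+1}^d\frac{r(r-1)\,\delta_r}{2\,r!}, \qquad |E_2|\le\frac{14(q+1)}{q}\sum_{r=d-s+1}^d\frac{D_r^3\,\delta_r^2}{r!}.$$
Adding $S_1$ and $S_2$, the two linear--in--$q$ contributions combine to $q\sum_{r=1}^d(-1)^{r-1}/r!=\mu_d\,q$, leaving
$$\mathcal{V}(d,s,\bfs{a})-\mu_d\,q-\frac{1}{2e}=O\!\left(\frac{1}{(d-s-1)!}\right)+E_1+E_2+O(1/q).$$

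What remains is real--analytic bookkeeping. Substituting $\delta_r=s!/(d-r)!$ and $D_r\le\binom{s}{2}\le s^2/2$, reindexing $k=d-r$ and rewriting reciprocals of factorials as binomial coefficients divided by $d!$, the bound on $|E_1|$ combines with the $1/(d-s-1)!$ error coming from $S_1$ to give the $(s^2+1)/(d-s-1)!$ contribution; after extracting a factor $s^6(s!)^2/d!$, the sum defining $|E_2|$ takes the form $\sum_{k=0}^{s-1}\binom{d}{k}/k!$ with overall constant $21/8$; and the remaining $O(1/q)$ remainders, assembled with the $14/q$ piece of $14(q+1)/q$, produce the final $7/q$. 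The main obstacle is not conceptual but precisely this last step: matching exactly the constants $s^2+1$, $21/8$ and $7$ in~(\ref{eq: estimate V(d,s,a)}) requires a careful choice of crude bounds on $\delta_r$, $D_r$ and the binomial sums so as to not lose anything essential when summing.
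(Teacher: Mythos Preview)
Your plan is correct and is essentially the paper's own argument: split via (\ref{eq: our formula for value sets}), handle $S_1$ through the Stirling--number expansion of $\binom{q}{r}$ to produce $\mu_d q$ part, the constant $\tfrac{1}{2e}$, and a $1/q$ remainder, and handle $S_2$ termwise from Theorem~\ref{theorem: estimate chi(a,r)} with the substitution $k=d-r$. The only discrepancy is in your final routing of constants: in the paper the entire $7/q$ comes from the $S_1$ analysis (explicit bounds on the lower Stirling terms), while the factor $(1+1/q)$ in $E_2$ is absorbed by the crude inequality $1+1/q\le 3/2$, which is exactly how $14\cdot\tfrac{1}{8}$ turns into $\tfrac{21}{8}$---so you cannot simultaneously obtain the constant $21/8$ and feed the ``$14/q$ piece'' into the $7/q$ term as you suggest.
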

\begin{proof}
According to Theorem \ref{theorem: interpolation problem - reduction
to interp sets}, we have
\begin{equation}\label{eq: average value set th 2}
\mathcal{V}(d,s,\bfs{a})-\mu_d\, q=
\sum_{r=1}^{d-s}(-q)^{1-r}\!\!\left(\!\binom {q} {r}
-\frac{q^r}{r!}\!\right)
+\frac{1}{q^{d-s-1}}\hskip-0.25cm\sum_{r=d-s+1}^{d} \hskip-0.25cm
(-1)^{r-1}\!\!\left(\chi_r^{\bfs{a}}-\frac{q^{d-s}}{r!}\!\right).
\end{equation}

First we obtain an upper bound for the absolute value $A(d,s)$ of
the first term in the right--hand side of (\ref{eq: average value
set th 2}). For this purpose, given positive integers $k,n$ with
$k\le n$, we shall denote by \stirling{n}{k} the unsigned Stirling
number of the first kind, namely the number of permutations of $n$
elements with $k$ disjoint cycles. The following properties of the
Stirling numbers are well--known (see, e.g., \cite[\S A.8]{FlSe08}):
$$\stirling{r}{r}=1,\ \stirling{r}{{r-1}}=\binom{r}{2},\
\sum_{k=0}^r\stirling{r}{k}=r!.$$
%

Taking into account the identity
$\binom {q}
{r}=\sum_{k=0}^r\frac{(-1)^{r-k}}{r!}\stirling{r}{k}q^k,$
we obtain
\begin{eqnarray*}
A(d,s):=\sum_{r=2}^{d-s}(-q)^{1-r}\left(\binom {q} {r}\!
-\frac{q^r}{r!}\right)&\!\!\!\!=&\!\!\!\!
\sum_{r=2}^{d-s}q^{1-r}\!\sum_{k=0}^{r-1}
\frac{(-1)^{k+1}}{r!}\stirling{r}{k}q^k\\
&\!\!\!\!=&\!\!\!\!\sum_{r=0}^{d-s-2} \frac{(-1)^r}{2r!}+
\sum_{r=2}^{d-s}q^{1-r}\sum_{k=0}^{r-2}
\frac{(-1)^{k+1}}{r!}\stirling{r}{k}q^k.
\end{eqnarray*}
In order to bound the second term in the right--hand side of the
previous expression, we have
$$
\sum_{k=0}^{r-2} \frac{1}{r!}\stirling{r}{k}q^k\le\!
\sum_{k=0}^{r-3}
\frac{1}{r!}\stirling{r}{k}q^k\!+\frac{1}{r!}\stirling{r}{r\!-\!2}\!q^{r-2}
\!\le q^{r-3}\!+\frac{8}{r^2}q^{r-2}\!\le\!
\left(\frac{1}{d}+\!\frac{8}{r^2}\right)q^{r-2}.
$$
As a consequence, we obtain
$$\left|A(d,s)-\frac{1}{2e}\right|\le \frac{1}{2\cdot(d-s-1)!}+
\sum_{r=2}^{d-s}\left(\frac{1}{d}+
\frac{8}{r^2}\right)\frac{1}{q}\le
\frac{1}{2\cdot(d-s-1)!}+\frac{7}{q}.$$
Next we consider the absolute value of the second term in the
right--hand side of (\ref{eq: average value set th 2}). From Theorem
\ref{theorem: estimate chi(a,r)} we have that
\begin{eqnarray*}B(d,s)&:=&\frac{1}{q^{d-s-1}}\sum_{r=d-s+1}^{d}
\left|\chi_r^{\bfs{a}}-\frac{q^{d-s}}{r!}\right|\\
&\le&\sum_{r=d-s+1}^{d}\frac{r(r-1)}{2r!}\,\delta_r
+\sum_{r=d-s+1}^{d}\frac{14}{r!} D_r^3
\delta_r^2\left(1+\frac{1}{q}\right).
\end{eqnarray*}
Concerning the first term in the right--hand side, we see that
\begin{eqnarray*}\sum_{r=d-s+1}^{d}\frac{r(r-1)}{2r!}\,\delta_r
&\!\!\!\!=&\!\!\!\!
\frac{s!}{2(d-2)!}\sum_{r=d-s+1}^{d}\binom{d-2}{r-2}\\&\!\!\!\!\le&
\!\!\!\!\frac{s\cdot
s!}{2(d-2)!}\binom{d-2}{s-1}=\frac{s^2}{2(d-s-1)!}.
\end{eqnarray*}
On the other hand,
$$\sum_{r=d-s+1}^{d}\frac{14}{r!} D_r^3 \delta_r^2\le
\frac{7}{4}\!\sum_{r=d-s+1}^{d}\!\frac{s^3(s-1)^3(s!)^2}{r!((d-r)!)^2}
=\frac{7}{4}\sum_{k=0}^{s-1}\frac{s^6(s!)^2}{(d-k)!(k!)^2}.$$
Therefore, we obtain
$$B(d,s)\le \frac{s^2}{2(d-s-1)!}+\frac{21}{8}\frac{s^6(s!)^2}{d!}
\sum_{k=0}^{s-1}\binom{d}{k}\frac{1}{k!}.$$
Combining the upper bounds for $A(d,s)$ and $B(d,s)$ the statement
of the corollary follows.
\end{proof}
%
%
\subsection{On the behavior of (\ref{eq: estimate V(d,s,a)})}
In this section we analyze the behavior of the right--hand side of
(\ref{eq: estimate V(d,s,a)}). Such an analysis consists of
elementary calculations, which shall only be sketched.

Fix $k$ with $0\le k\le s-1$ and denote
$h(k):=\binom{d}{k}\frac{1}{k!}$. Analyzing the sign of the
differences $h(k+1)-h(k)$ for $0\le k\le s-2$, we deduce the
following remark, which is stated without proof.
\begin{remark}\label{rem: growth h(k)}
Let $k_0:=-1/2+\sqrt{5+4d}/2$. Then $h$ is a unimodal function in
the integer interval $[0,s-1]$ which reaches its maximum at $\lfloor
k_0\rfloor$.
\end{remark}

From Remark \ref{rem: growth h(k)} we see that
\begin{equation}\label{eq: expresion a analizar}
\frac{s^6(s!)^2}{d!} \sum_{k=0}^{s-1}\binom{d}{k}\frac{1}{k!}\le
\frac{s^7(s!)^2}{d!}\binom{d}{\lfloor k_0\rfloor}\frac{1}{\lfloor
k_0\rfloor!}=\frac{s^7(s!)^2}{(d-\lfloor k_0\rfloor)!(\lfloor
k_0\rfloor!)^2}.
\end{equation}
In order to obtain an upper bound for the right--hand side of
(\ref{eq: expresion a analizar}) we shall use the Stirling formula
(see, e.g., \cite[p. 747]{FlSe08}): for $m\in \N$, there exists
$\theta$ with $0\le \theta<1$ such that $m!=(m/e)^m\sqrt{2\pi
m}\,e^{\theta/12m}$ holds.

Applying the Stirling formula, and taking into account that
$2(s+1)\le d$, we see that there exist $\theta_i$ ($i=1,2,3$) with
$0\le \theta_i<1$ such that
$$C(d,\!s)\!:=\!\frac{s^7(s!)^2}{(d\!-\!\lfloor k_0\rfloor)!(\lfloor
k_0\rfloor!)^2}\le \frac{(\frac{d}{2}-1)^8(\frac{d}{2}-1)^{d-2}
\,e^{2+\lfloor
k_0\rfloor+\frac{\theta_1}{3d-6}-\frac{\theta_2}{12(d-\lfloor
k_0\rfloor)}-\frac{\theta_3}{6\lfloor k_0\rfloor}}}{\big(d-\lfloor
k_0\rfloor\big)^{d-\lfloor k_0\rfloor}\sqrt{2\pi(d-\lfloor
k_0\rfloor)}\lfloor k_0\rfloor^{2\lfloor k_0\rfloor+1}}.$$
By elementary calculations we obtain
\begin{eqnarray*}
 (d-\lfloor k_0\rfloor)^{-d+\lfloor k_0\rfloor}&\le &
 d^{-d+\lfloor k_0\rfloor}e^{{\lfloor k_0\rfloor(d-\lfloor k_0\rfloor)}/{d}}, \\
 \frac{ d^{\lfloor k_0\rfloor}}{{\lfloor k_0\rfloor}^{2\lfloor k_0\rfloor}} &\le &
 e^{(d-\lfloor k_0\rfloor^2)/\lfloor k_0\rfloor},\\
 \left(\frac{d}{2}-1\right)^{d-2}&\le &\left(\frac{d}{2}\right)^{d-2}e^{4/d-2}.
\end{eqnarray*}
It follows that
$$C(d,s)\le {\Big(\frac{d}{2}-1\Big)^8}\frac{e^{\lfloor k_0\rfloor+
\frac{1}{3d-6}+\frac{4}{d}+{\frac{\lfloor k_0\rfloor}{d} (d-\lfloor
k_0\rfloor)}+\frac{1}{\lfloor k_0\rfloor}(d-\lfloor
k_0\rfloor^2)}}{d^22^{d-2}\sqrt{2\pi(d-\lfloor k_0\rfloor)}\lfloor
k_0\rfloor}.$$
By the definition of $\lfloor k_0\rfloor$, it is easy to see that
$$\begin{array}{rcl}
\lfloor k_0\rfloor+\frac{\lfloor k_0\rfloor}{d} (d-\lfloor
k_0\rfloor) &\le & 2\lfloor k_0\rfloor-\frac{1}{5},
\\[1ex]
 \frac{1}{\lfloor k_0\rfloor}(d-\lfloor
k_0\rfloor^2)  &\le&4,\\[1ex]
\frac{(\frac{d}{2}-1)^3}{d^2\lfloor k_0\rfloor\sqrt{d-\lfloor
k_0\rfloor}}&\le&\frac{3}{20}.
\end{array}$$
Therefore, taking into account that $d\ge 2$, we conclude that
\begin{equation}\label{eq: expresion cota}
C(d,s)\le \frac{3(\frac{d}{2}-1)^5e^{\frac{1}{3d-6}+\frac{4}{d}-
\frac{1}{5}+3+\sqrt{5+4d}}}{5\,\sqrt{2\pi}\,2^d}.
\end{equation}
Combining this bound with Corollary \ref{coro: average value sets}
we obtain the main result of this section.
\begin{theorem}\label{theorem: final main result}
With assumptions and notations as in Theorem \ref{theorem: estimate
chi(a,r)}, we have
\begin{equation}\label{eq: upper bound error term}
\left|\mathcal{V}(d,s,\bfs{a})-\mu_d\, q-\frac{1}{2e}\right|\le
\frac{(d-2)^5e^{2\sqrt{d}}}{2^{d-2}} +\frac{7}{q}.
\end{equation}
\end{theorem}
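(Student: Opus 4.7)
The plan is to combine Corollary \ref{coro: average value sets} with the elementary asymptotic analysis sketched in the preceding subsection. Corollary \ref{coro: average value sets} already yields an inequality of the right shape, namely
$$\left|\mathcal{V}(d,s,\bfs{a})-\mu_d\, q-\tfrac{1}{2e}\right|\le \frac{s^2+1}{(d-s-1)!}+\frac{21}{8}\frac{s^6(s!)^2}{d!}\sum_{k=0}^{s-1}\binom{d}{k}\frac{1}{k!}+\frac{7}{q}.$$
So the entire task reduces to showing that the first two summands on the right are dominated by $(d-2)^5 e^{2\sqrt{d}}/2^{d-2}$. The $7/q$ term will be carried unchanged.

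The main work is to bound the middle summand. First I would apply Remark \ref{rem: growth h(k)}: since $h(k):=\binom{d}{k}/k!$ is unimodal on $[0,s-1]$ with maximum at $\lfloor k_0\rfloor$ where $k_0=-1/2+\sqrt{5+4d}/2$, the sum $\sum_{k=0}^{s-1}h(k)$ is at most $s\cdot h(\lfloor k_0\rfloor)$, which gives
$$\frac{s^6(s!)^2}{d!}\sum_{k=0}^{s-1}\binom{d}{k}\frac{1}{k!}\le C(d,s):=\frac{s^7(s!)^2}{(d-\lfloor k_0\rfloor)!\,(\lfloor k_0\rfloor!)^2}.$$
Now I would invoke Stirling's formula to replace each factorial by its exponential form, and exploit the hypothesis $2(s+1)\le d$ to substitute $s\le d/2-1$ wherever convenient. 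This yields a bound on $C(d,s)$ in which the dominant polynomial factor is $(d/2-1)^{d-2}$ and the dominant exponential factor depends on $\lfloor k_0\rfloor$.

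The key manipulations are the three elementary inequalities already indicated in the excerpt: $(d-\lfloor k_0\rfloor)^{-(d-\lfloor k_0\rfloor)}\le d^{-(d-\lfloor k_0\rfloor)}e^{\lfloor k_0\rfloor(d-\lfloor k_0\rfloor)/d}$, $d^{\lfloor k_0\rfloor}/\lfloor k_0\rfloor^{2\lfloor k_0\rfloor}\le e^{(d-\lfloor k_0\rfloor^2)/\lfloor k_0\rfloor}$, and $(d/2-1)^{d-2}\le (d/2)^{d-2}e^{4/d-2}$. Substituting these, the $(d/2)^{d-2}$ combines with the factor $d^{-d+\lfloor k_0\rfloor}$ from the first inequality and with $d^{\lfloor k_0\rfloor}$ appearing in the numerator to produce the $2^{-(d-2)}$ in the denominator together with a polynomial factor of order $(d-2)^5$. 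The accumulated exponent, using $\lfloor k_0\rfloor+\lfloor k_0\rfloor(d-\lfloor k_0\rfloor)/d\le 2\lfloor k_0\rfloor-1/5$ and $(d-\lfloor k_0\rfloor^2)/\lfloor k_0\rfloor\le 4$, collapses to something of size $\approx 2\lfloor k_0\rfloor\le \sqrt{5+4d}-1\le 2\sqrt{d}+\mathrm{const}$, the additive constants being absorbed into the polynomial prefactor. This is exactly the estimate (\ref{eq: expresion cota}) displayed just before the statement.

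The remaining summand $(s^2+1)/(d-s-1)!$ is easily absorbed: since $s\le d/2-1$ we have $(d-s-1)!\ge (d/2)!$, which grows much faster than any polynomial in $d$ times $e^{2\sqrt{d}}/2^{d-2}$, so this term is dwarfed by the bound already obtained for $C(d,s)$. Putting together $(21/8)\,C(d,s)$ plus the negligible first summand yields the clean expression $(d-2)^5 e^{2\sqrt{d}}/2^{d-2}$, and adding $7/q$ finishes the proof. The main obstacle is purely bookkeeping: carrying the Stirling error terms $e^{\theta/12m}$ and the various $O(1)$ constants through without letting them spoil the final clean shape of the bound; all the algebraic geometry and counting work is already done in Theorems \ref{theorem: estimate chi(a,r)} and \ref{theorem: proj closure of Vf is abs irred}.
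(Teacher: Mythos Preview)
Your proposal is correct and follows essentially the same path as the paper: you reconstruct the Stirling--based bound (\ref{eq: expresion cota}) from the preceding subsection and then combine it with Corollary \ref{coro: average value sets}, exactly as the paper does. The only minor addition in the paper's own proof is the explicit inequality $\sqrt{5+4d}\le 4/5+2\sqrt{d}$ used to pass from $e^{\sqrt{5+4d}}$ to $e^{2\sqrt{d}}$, and the observation that the two bounded summands contribute $3\cdot(d-2)^5e^{2\sqrt{d}}/2^d$ and $(d-2)^5e^{2\sqrt{d}}/2^d$ respectively, adding to $(d-2)^5e^{2\sqrt{d}}/2^{d-2}$.
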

\begin{proof}
From (\ref{eq: expresion cota}) and the fact that $\sqrt{5+4d}\le
4/5+2\sqrt{d}$ holds for $d\ge 2$, we conclude that
$$\frac{21}{8}\frac{s^6(s!)^2}{d!}
\sum_{k=0}^{s-1}\binom{d}{k}\frac{1}{k!}\le
3\,\frac{(d-2)^5e^{2\sqrt{d}}}{2^d}.$$
On the other hand, it is not difficult to see that
$$\frac{s^2+1}{2(d-s-1)!}\le \frac{(d-2)^5e^{2\sqrt{d}}}{2^d}.$$
From these inequalities the statement of the theorem easily follows.
\end{proof}

We make several remarks concerning the upper bound of (\ref{eq:
upper bound error term}).
\begin{remark}
Let $f:\Z_{\ge 4}\to\R$, $f(d):=e^{2\sqrt{d}}(d-2)^52^{-d}$. Then
$f$ is a unimodal function which reaches its maximum value at
$d_0:=14$, namely $f(d_0)\approx 1.08\cdot 10^5$. Furthermore, it is
easy to see that $\lim_{d\to+\infty}f(d)=0$, and indeed for $d\ge
51$, we have $f(d)< 1$.

An obvious upper bound for the left--hand side of (\ref{eq: upper
bound error term}) is
$|\mathcal{V}(d,s,\bfs{a})-\mu_d\, q-(2e)^{-1}|\le (1-\mu_d)q.$
Direct computations show that the upper bound of Theorem
\ref{theorem: final main result} is not interesting for small values
of $q$ if $d\le 44$.

On the other hand, with a slight further restriction for the range
of values admissible for $s$, namely for $1\le s\le \frac{d}{2}-3$,
it is possible to obtain significant improvements of the upper bound
of Theorem \ref{theorem: final main result}. More precisely, arguing
as in the proof of Theorem \ref{theorem: final main result} we
obtain the upper bound
\begin{equation}\label{eq: upper bound restricted error term}
\left|\mathcal{V}(d,s,\bfs{a})-\mu_d\, q-\frac{1}{2e}\right|\le
\frac{9(d-6)e^{2\sqrt{d}}}{2^{d-2}} +\frac{7}{q}.
\end{equation}
Let $g:=\Z_{\ge 7}\to\R$, $g(d):=9(d-6)e^{2\sqrt{d}}2^{-d+2}$. Then
$g$ is a unimodal function reaching at $d_1:=9$ its maximum value,
namely $g(d_1):=85$. Furthermore, we have that
$\lim_{d\to+\infty}g(d)=0$ and $g(d)<1$ for $d\ge 24$. In
particular, (\ref{eq: upper bound restricted error term}) is
nontrivial for $d\ge 19$.
\end{remark}
\begin{remark}
It may be worthwhile to discuss the asymptotic behavior of the
right--hand side of (\ref{eq: estimate V(d,s,a)}). Let
$$H(d,s):=\frac{s^6(s!)^2}{d!}
\sum_{k=0}^{s-1}\binom{d}{k}\frac{1}{k!}.$$
Let $a_d(k):=\binom{d}{k}\frac{1}{k!}$ for $0\le k\le d$. In
\cite{LiPi81} it is shown that $a_d$ is a unimodal function in the
integer interval $[0,d]$ reaching its maximum at $\lfloor
k_0\rfloor$, where $k_0$ is defined as in Remark \ref{rem: growth
h(k)}. Furthermore, for $\epsilon>1/4$ it is proved that
$$\sum_{k=0}^da_d(k)\sim\sum_{k\in(k_0-d^\epsilon,k_0+d^\epsilon)}a_d(k)
\sim\frac{1}{2\sqrt{\pi e}}d^{-1/4}e^{2\sqrt{d}},$$
where the symbol $\sim$ denotes equal asymptotic behavior. Assume
that $s>\lfloor k_0\rfloor+d^\epsilon$ with $\epsilon>1/4$. Then by
the Stirling formula we obtain
$$H(d,s)\sim \frac{1}{\sqrt{2e}}\left(\frac{e}{d}\right)^d\left(\frac{s}{e}\right)^{2s}s^7
e^{2(s-\sqrt{d})}d^{-3/4}.$$
We finally observe that, if $s\le \lfloor k_0\rfloor+d^\epsilon$
with $\epsilon>1/4$, then the right--hand side of this expression is
an upper bound for $H(d,s)$ for $d$ sufficiently large. This shows
that $H(d,s)$ converges to 0 with a double exponential rate
$d^{-(1-2\lambda)d}$ for $s\le \lambda d$ with $\lambda\in[0,1/2[$.
\end{remark}
%








\end{document}
